\documentclass[12pt]{amsart}
\title[Stablility of conjugation-invariant word norms]{The stable conjugation-invariant word norm is rational in free groups}
\keywords{Bi-invariant word metric, Stable word length, Context-Free Language, Semilinear sets, Parikh’s Theorem}
\author{Henry Jaspars}
\subjclass[2020]{Primary: 20F65, 20E05; Secondary: 68R15, 68Q45}

\usepackage[T1]{fontenc}
\usepackage{amsmath,amsthm,amssymb}
\usepackage{mathtools}
\usepackage{enumitem}
\usepackage{hyperref}
\usepackage{cleveref}

\newenvironment{claim}[1][]{\par\noindent\textit{Claim.}\space}{}
\newenvironment{claimproof}[1][]{\par\noindent\textit{Proof of claim.}\space\pushQED{\qed}}{\popQED}
\usepackage[all]{xy}

\newtheorem{theorem}[subsection]{Theorem}
\newtheorem{lemma}[subsection]{Lemma}
\newtheorem{prop}[subsection]{Proposition}

\newtheorem{cor}[subsection]{Corollary}

\theoremstyle{definition}
\newtheorem{defn}[subsection]{Definition}
\newtheorem{example}[subsection]{Example}
\newtheorem{examples}[subsection]{Examples}

\theoremstyle{remark}
\newtheorem{remark}[subsection]{Remark}

\newtheorem*{ack*}{Acknowledgements}

\numberwithin{equation}{section}
\numberwithin{figure}{section}

\newcommand{\B}[1]{{\mathbf #1}}
\newcommand{\C}[1]{{\mathcal #1}}

\newcommand{\OP}{\operatorname}
\newcommand{\col}{\vcentcolon}

\begin{document}
\begin{abstract}
We establish the rationality of the stable conjugation-invariant word norm on free groups and virtually free Coxeter groups.
\end{abstract}
\maketitle
\section{Introduction}\label{S:intro}

A \textit{norm} on a group $G$ is a non-negative 
function $\nu\colon G\to \B R$ such that for all $g, h \in G$,
\begin{enumerate}
    \item $\nu(g) = 0$ if and only if $g=1$;
    \item $\nu(gh) \leq \nu(g) + \nu(h)$.
\end{enumerate}
Moreover, if $\nu(ghg^{-1})=\nu(h)$ for all $g,h\in G$, then
the norm is called \textit{conjugation-invariant}. Examples of conjugation-invariant norms include the Hofer norm
on the group of Hamiltonian diffeomorphisms of a symplectic
manifold \cite{sympletic}, the commutator length \cite{scl}, and other verbal norms \cite{wlength}, as well as word norms associated with generating sets invariant
under conjugations \cite{jarek}, which in the context of Coxeter groups are known as \textit{reflection length}, and have been studied extensively \cite{cox1, cox2, cox3}. In a combinatorial setting, conjugation-invariant norms also bear relation to non-crossing matchings and enumerating unpaired nucleotides in secondary RNA structures \cite{rna}.

The \textit{stable (or translation) length} of $g\in G$ with respect to
$\nu$ is the limit $$\tau_{\nu}(g)\col=\lim_{k\to \infty} \frac{\nu(g^k)}{k}.$$
In this paper we prove that the stable
length with respect to the conjugation-invariant word norm on certain virtually free groups is rational. 

More precisely, throughout, let $G$ be a group normally generated by
$S$. Let $\|g\|_S$ denote the
\textit{conjugation-invariant word norm} associated with $\bar{S}= \bigcup_{s \in S} {\rm
Conj}(s)$, where ${\rm Conj}(s)$ denotes the conjugacy class of $s$, namely, 
$$\|g\|_S \vcentcolon= \inf\{\ell: g_1 s_1 g_1^{-1} \hdots g_\ell s_\ell g_\ell ^{-1} = g,\, g_i \in G, s_i \in S\}.$$
Also define the \textit{cancellation length} of a word $w$ with characters in $S$ to be the minimum number of letters which need to be deleted to obtain a word representing the trivial element. It is known that if $G = \langle X\, |\, R \rangle$ is a free group freely generated by $X$ or a virtually free Coxeter group with standard Coxeter presentation, then for a word $w$ with characters in $S$ representing an element $g \in G$, the cancellation norm of $w$ is equal to the conjugation-invariant norm $\|g\|_S$, and in particular is invariant upon the choice of $w$ representing $g$ \cite[Proposition 2.E']{jarek}.

The question of rationality for stable word norms over finitely presented groups is a subject of interest in geometric group theory and analysis. In \cite{rationality}, Calegari showed that the stable commutator norm over free groups is rational using techniques from topology and geometry. It is also known that rationality of the stable commutator norm does not hold for general finitely presented groups \cite{zhuang}, proving in the negative a conjecture of Gromov \cite{gromov}. This motivates the analogous study of the stable conjugation-invariant word norm, for which we prove the following result.

\begin{theorem}\label{T:main}
Let $G = \langle X \, | \, R\rangle$ be a free group freely generated by $X$ or a virtually free Coxeter group with the standard Coxeter presentation respectively. Let $S = X^{\pm 1}$, and further let $g_1,\ldots,g_n\in G$.
Then the sequence 
$(\|g_1^k\hdots g_n^k\|_S)_k$
is uniformly semi-arithmetic
in the sense of \Cref{D:semi-arithmetic}. Moreover, 
$$\tau_S(g_1, \hdots, g_n) \col= \lim_{k\to \infty} \frac{\|g_1^k\hdots g_n^k\|_S}{n}$$ 
exists and is equal to $\frac{d}{m} \in \B Q$, where $(\|g_1^k\hdots g_n^k\|_S)_k$ has period $m>0$ and difference $d\geq 0$.
\end{theorem}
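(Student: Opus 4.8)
The plan is to translate the conjugation-invariant norm into a combinatorial optimization over words, to exhibit the set of relevant parameter pairs as the Parikh image of a context-free language, and then to read off the rational limit from semilinearity.

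\emph{Reduction to longest trivial subwords, and a Lipschitz bound.} By \cite[Proposition 2.E']{jarek}, $\|h\|_S$ equals the cancellation length of any word over $S$ representing $h$. Fix words $w_i$ over $S$ with $\overline{w_i}=g_i$ (we may assume each $g_i\neq 1$, discarding trivial factors), put $\ell_i\col=|w_i|$, $L\col=\sum_i\ell_i$, and $u_k\col=w_1^k w_2^k\cdots w_n^k$, a word of length $kL$ representing $g_1^k\cdots g_n^k$. Writing $v\preceq u$ for ``$v$ is a subsequence of $u$'', and noting that deleting letters of $u_k$ to reach a word representing $1$ is the same as retaining some $v\preceq u_k$ with $\overline v=1$,
\[
\|g_1^k\cdots g_n^k\|_S \;=\; kL - M(k),\qquad M(k)\col=\max\{\,|v| : v\preceq u_k,\ \overline v=1\,\},
\]
so it suffices to show $(M(k))_k$ is eventually of the form $M(k+m)=M(k)+d$ with $m>0$ and $0\le d\le mL$. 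Since $u_k\preceq u_{k+1}$ we get $M(k+1)\ge M(k)$; conversely, deleting from an optimal trivial subword of $u_{k+1}$ the letters lying in one chosen copy of each $w_i$ (at most $L$ letters) yields a subword $v'\preceq u_k$ whose represented element is a product of $\le n$ conjugates of elements of word-length $\le\ell_i$, so $\|\overline{v'}\|_S\le L$, and deleting $\le L$ further letters (again by \cite[Proposition 2.E']{jarek}) leaves a trivial subword of $u_k$ of length $\ge M(k+1)-2L$. Hence $0\le M(k+1)-M(k)\le 2L$ for all $k$.

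\emph{Semilinearity via Parikh's theorem.} This is the heart of the argument. Let $\bar S$ be a disjoint marked copy of $S$ and $\Sigma\col=(S\sqcup\bar S)\times\{1,\dots,n\}$, a letter of which records whether a position of $u_k$ is kept or deleted and in which block it lies. Let $\mathcal U\subseteq\Sigma^*$ be the (regular) set of concatenations (block $1$)$\cdots$(block $n$) in which the block-$i$ factor runs over all words obtained from a power $w_i^{k_i}$ by independently marking letters; from such a factor one recovers $k_i=(\text{number of block-}i\text{ letters})/\ell_i$. Let $T\subseteq S^*$ be the set of words representing $1$ in $G$ — the two-sided Dyck language when $G$ is free, and context-free by the Muller–Schupp theorem when $G$ is virtually free Coxeter — and let $\phi\colon\Sigma^*\to S^*$ erase marked letters and send $(s,i)\mapsto s$. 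Then $\mathcal L\col=\phi^{-1}(T)$ is context-free, hence so is $\mathcal L\cap\mathcal U$, and for a word of $\mathcal L\cap\mathcal U$ the kept positions spell a subsequence of $w_1^{k_1}\cdots w_n^{k_n}$ representing $1$, of length $t=(\text{number of kept letters})$. By Parikh's theorem the Parikh image of $\mathcal L\cap\mathcal U$ is semilinear; reading off $(k_1,\dots,k_n,t)$ from it by a $\B Q$-affine map (which preserves semilinearity), intersecting with the semilinear diagonal $\{k_1=\cdots=k_n\}$, and projecting, we conclude that
\[
\Lambda\col=\{(k,t)\in\B N^2 : u_k \text{ has a subsequence of length } t \text{ representing } 1\}
\]
is semilinear. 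The decisive point — and the step I expect to require the most care — is that $\mathcal U$ is regular even though $\{u_k\}_k$ is not context-free for $n\ge 3$: the constraint that all block-exponents coincide is imposed \emph{after} invoking Parikh's theorem, at the level of semilinear sets, which is precisely what glues the subword-length count $t$ to the single exponent $k$.

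\emph{Extraction of the rational limit.} Since semilinear sets are exactly the Presburger-definable ones, the graph $\{(k,t):t=M(k)\}=\{(k,t)\in\Lambda:\forall t'>t,\ (k,t')\notin\Lambda\}$ is Presburger-definable, so the total, nondecreasing function $M$ is eventually quasi-linear: there are $m>0$ and, for each residue $r$ modulo $m$, a rational $\alpha_r\ge 0$ and a constant $\beta_r$ with $M(k)=\alpha_r k+\beta_r$ for all large $k\equiv r$. If two slopes differed, then along a suitable residue class the increments $M(k+\delta)-M(k)$ with $\delta\in\{1,\dots,m-1\}$ fixed would be unbounded, contradicting the Lipschitz bound; hence all $\alpha_r$ equal a common $\alpha\in[0,L]$, and $d\col=\alpha m=M(k+m)-M(k)\in\B N$ for all large $k$. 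Therefore $\|g_1^{k+m}\cdots g_n^{k+m}\|_S=\|g_1^k\cdots g_n^k\|_S+(mL-d)$ for all large $k$, so $(\|g_1^k\cdots g_n^k\|_S)_k$ is semi-arithmetic in the sense of \Cref{D:semi-arithmetic} with period $m$ and difference $mL-d\ge 0$, and
\[
\tau_S(g_1,\dots,g_n)=\lim_{k\to\infty}\frac{\|g_1^k\cdots g_n^k\|_S}{k}=L-\alpha=\frac{mL-d}{m}\in\B Q.
\]
As Parikh's theorem and the passage to the quasi-linear form of $M$ are constructive, the period and the eventual threshold can be bounded in terms of the $g_i$, giving the uniformity asserted in \Cref{T:main}.
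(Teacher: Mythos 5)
Your proposal is correct and follows essentially the same route as the paper: reduce $\|\cdot\|_S$ to cancellation length via K\k{e}dra--Libman, encode ``subsequences of $w_1^{k_1}\cdots w_n^{k_n}$ that represent $1$'' as the intersection of a regular control language with a context-free language obtained from the word problem (Muller--Schupp) by an inverse homomorphism, apply Parikh's theorem, impose the equal-exponent constraint only at the level of semilinear sets, and finish with a Lipschitz bound that forces all residue-class slopes to agree (this is exactly the role of \Cref{L:usa criterion} in the paper, applied in \Cref{T:multivariate CFL relative length}). The differences are implementational. First, your marked-alphabet encoding recovers the exponent $k_i$ as (block-$i$ letter count)$/\ell_i$, so your ``$\B Q$-affine map preserves semilinearity'' step needs a small justification (e.g.\ via the Ginsburg--Spanier equivalence of semilinear and Presburger-definable sets); the paper sidesteps this by inserting one marker letter $y_i$ per repetition of $w_i$ in the enumeration language, so that the exponents are read off by an $\B N$-linear map and the diagonal intersection is literal. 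Second, where you invoke the fact that a Presburger-definable total function $\B N\to\B N$ is eventually quasi-linear on residue classes, the paper proves the needed special case from scratch (\Cref{P:Env of submonoid} and \Cref{C:Env of semilinear}, via the Frobenius lemma applied to the lower envelope of a finitely generated submonoid of $\B N^2$); your appeal is to a true and citable result, but as stated it is asserted rather than proved, and it carries essentially the same content as the paper's Section~3, so a reference (or a short argument along the paper's lines) should accompany it. Working with the maximal trivial subword length $M(k)$ (upper envelope) instead of the norm itself is an equivalent, purely cosmetic change.
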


\begin{example}
Consider the free product of $n$ cyclic groups of order 2, $\B Z/2\B Z* \dots *\B Z/2\B Z$. Since this is a virtually free Coxeter group admitting a standard Coxeter presentation with generating set $X = \{s_1, \hdots, s_n\}$ (where $s_i$ is the generator of the $i$th copy of $\B Z/2 \B Z$), \Cref{T:main} applies in this instance. Virtually free Coxeter groups are characterised in \cite[Proposition 8.8.5]{coxeter}. 
\end{example}

We will illustrate the result with three examples in the case where $F = \langle a, b \rangle$ is the free group of rank 2 with free generating set $S = \{a^{\pm 1}, b^{\pm 1}\}$.
\\ \\
\begin{examples}
\mbox{}
\begin{enumerate}
\item For $a^n b^m \in F$, then $\|g^k\| = k\cdot(n + m)$, and coincides with the standard word norm. Hence $\tau_S(g) = n + m$. 

\item Consider $[a, b] \in F$. Let $\psi: F \rightarrow \B Z^2$ be the abelianisation with $\psi(a) = (1,0)$ and $\psi(b) = (0,1)$, and for reduced $g = s_1 \hdots s_n$ with $s_i \in S$, let $g_k = s_1 \hdots s_k$. Define quasimorphism $\xi: F \rightarrow \B Z$ where $\xi(g)$ is be the number of left turns minus the number of right turns in the path $(\psi(g_k))_{k = 1}^n$ (as introduced in \cite[Example 6.5]{quasi}). It is immediate that $\xi([a, b]^k) = 4k - 1$, that $|\xi(gh) - \xi(g) - \xi(h)| \leq 2$, and $|\xi(gsg^{-1})|\leq 2$ for all $g, h \in F$ and $s \in S$. Hence, if $h = g_1 s_1 g_1^{-1} \hdots g_\ell s_\ell g_\ell ^{-1}$, by induction, $|\xi(h)| \leq 4 \ell - 2$, and hence $4k - 1 = |\xi([a, b]^k)| \leq 4 \|[a, b]^k\|_S - 2$. Therefore, we have that $\|[a, b]^k\|_S \geq 2 \lfloor k/2 \rfloor + 2$.

To see that this is sharp, observe that by deleting the first $\lfloor k / 2 \rfloor$ $b$'s and the final $\lfloor k / 2 \rfloor$ $b^{-1}$'s in the reduced word of $[a, b]^k$, we obtain $b^{-\lfloor k/2 \rfloor} a b^{\lfloor k/2 \rfloor} a^{-1}$ for $k$ even and $b^{-\lfloor k/2 \rfloor} a b^{\lceil k/2 \rceil} a^{-1} b^{-1}$ otherwise, which both have cancellation length 2. Therefore $\|[a, b]^k\|_S = 2 \lfloor k / 2 \rfloor + 2$ and moreover $\tau_S([a, b]) = 1$. In particular, in this case, the sequence is not arithmetic.

\item For $a^m b a^{-m}, b^{-1} \in F$, since $(a^m ba^{-m})^k (b^{-1})^k = a^m b^k a^{-m} b^{-k}$, it follows that $\|(a^m ba^{-m})^k (b^{-1})^k\| = 2 \min\{m, k\}$ and hence $\tau_S(a^m b a^{-m}, b^{-1}) = 0$.
\end{enumerate}
\end{examples}

\begin{remark}
These examples suggest that \ref{T:main} cannot be strengthened, as we cannot replace uniform semi-arithmeticity with mere arithmeticity.
\end{remark}

\begin{ack*}
The author would like to offer thanks to both Jarek K\k{e}dra and Assaf Libman, without whose indispensable mentorship, advice, time and support this paper would not have been possible. The author would also like to thank the referee for drawing the author's attention to the literature on reflection lengths.
\end{ack*}

\section{Semi-arithmetic sequences}

Let $\widehat{\B N} \col= \B N \cup \{\infty\}$ with the usual total order, where $x+\infty=\infty+x=\infty$ for any $x \in \widehat{\B N}$. Moreover, let an \textit{arithmetic sequence with difference $d \geq 0$} have the usual meaning. Observe that the constant infinite sequence $(a_k)_k$ where $a_k = \infty$ for $k \geq 0$ is arithmetic with difference $d$ for any $d \geq 0$.

\begin{defn}\label{D:semi-arithmetic}
Let $(a_k)_k$ be a sequence in $\widehat{\B N}$. Then, it is:
\begin{enumerate}
\item
\textit{Eventually finite} if $a_k \in \B N$ for all $k$ sufficiently large;

\item
\textit{Eventually arithmetic} if $(a_{k + n})_k$ is arithmetic for some $n \geq 0$;

\item
\textit{Semi-arithmetic} with period $m > 0$ if for all $n \geq 0$, the sequences $(a_{mk + n})_k$ are eventually arithmetic with differences $d_n$; 

\item
{\em Uniformly semi-arithmetic} with period $m>0$ and difference $d \geq 0$ if for all $n \geq 0$, the sequences $(a_{mk + n})_k$ are eventually arithmetic with difference $d$.
\end{enumerate}
\end{defn}

\begin{remark}\label{helpfulrem}
Let $(a_k)_k$ be a semi-arithmetic sequence with period $m$. Since, for all $n \geq 0$, $(a_{mk + n + m})_k$ is a tail of $(a_{mk + n})_k$, it follows that $d_n = d_{n + m}$ for all $n \geq 0$.
\end{remark}

The following two results are immediate consequences of \Cref{D:semi-arithmetic}:

\begin{prop}\label{L:min semi-arithmetic}
Let $(a_k)_k$ and $(a'_k)_k$ be semi-arithmetic sequences with periods $m$ and $m'$ respectively. Then the sequence $(\min\{ a_k,a'_k\})_k$ is semi-arithmetic with period $mm'$. \qed
\end{prop}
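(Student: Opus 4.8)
The plan is to prove \Cref{L:min semi-arithmetic} directly from the definition by a residue-chasing argument. Fix the two semi-arithmetic sequences $(a_k)_k$ and $(a'_k)_k$ with periods $m$ and $m'$, and set $M = mm'$. To show $(\min\{a_k, a'_k\})_k$ is semi-arithmetic with period $M$, I must exhibit, for each residue $n$ with $0 \le n < M$ (and in fact for every $n \ge 0$), some difference $\tilde d_n \ge 0$ such that $(\min\{a_{Mk+n}, a'_{Mk+n}\})_k$ is eventually arithmetic with difference $\tilde d_n$.

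First I would observe that for a fixed $n$, the subsequence $k \mapsto a_{Mk+n}$ is a subsequence of $k \mapsto a_{mk + (n \bmod m)}$ along the arithmetic progression of indices $k \equiv n/m' \pmod{\,}$... more carefully: write $Mk + n = m(m'k + q) + r$ where $n = mq + r$ with $0 \le r < m$; then $k \mapsto a_{Mk+n}$ is the subsequence of the eventually arithmetic sequence $(a_{mj + r})_j$ obtained by restricting to $j = m'k + q$. A subsequence of an eventually arithmetic sequence along an arithmetic progression of indices with common difference $m'$ is again eventually arithmetic, with difference $m' d_r$ (where $d_r$ is the difference of $(a_{mj+r})_j$), provided $d_r$ is finite; if $d_r = \infty$ the subsequence is eventually the constant $\infty$ sequence, which by the convention recalled before \Cref{D:semi-arithmetic} is arithmetic with difference $\infty$ — and here I should note the definition permits differences in $\widehat{\B N}$ only insofar as arithmeticity with difference $d$ was defined for $d \ge 0$; since the paper treats the all-$\infty$ sequence as arithmetic with any difference, I will simply record that case as contributing $\tilde d_n = \infty$ or absorb it. So for each $n$, both $(a_{Mk+n})_k$ and $(a'_{Mk+n})_k$ are eventually arithmetic, say with differences $e_n$ and $e'_n$ respectively.

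Next I would handle the pointwise minimum of two eventually arithmetic sequences with (possibly different) differences. If $e_n \ne e'_n$, say $e_n < e'_n$, then for all large $k$ the term $a_{Mk+n}$ grows strictly more slowly, so eventually $a_{Mk+n} < a'_{Mk+n}$ (using that an arithmetic sequence with the smaller difference overtakes — from below — one with the larger difference, and stays below), hence $\min\{a_{Mk+n}, a'_{Mk+n}\} = a_{Mk+n}$ for all large $k$, and the minimum is eventually arithmetic with difference $\tilde d_n = \min\{e_n, e'_n\}$. If $e_n = e'_n =: e$, then for large $k$ both subsequences have the form (constant$) + ek$, so their difference $a_{Mk+n} - a'_{Mk+n}$ is eventually constant, hence $\min\{a_{Mk+n}, a'_{Mk+n}\}$ equals whichever of the two is eventually $\le$ the other (ties are fine, pick either), and again is eventually arithmetic with difference $\tilde d_n = e$. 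In all cases $(\min\{a_{Mk+n}, a'_{Mk+n}\})_k$ is eventually arithmetic, which is exactly what semi-arithmeticity of period $M = mm'$ demands.

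The only real subtlety — and the step I would be most careful about — is the interaction with $\widehat{\B N}$ and the convention that the all-$\infty$ sequence counts as arithmetic with every difference: one must check that "overtaking" arguments still make sense when some differences are $\infty$ (an eventually-$\infty$ sequence is eventually $\ge$ any finite-difference sequence, so the minimum picks out the finite one), and that when \emph{both} are eventually $\infty$ the minimum is too. None of this requires computation, only bookkeeping of cases; since \Cref{D:semi-arithmetic} demands nothing about the differences $\tilde d_n$ beyond their existence, the argument closes. (I would remark in passing that this also shows $\tilde d_n = \min\{e_n, e'_n\}$, which is what one would use if tracking differences through an iterated minimum, e.g.\ in the proof of \Cref{T:main}.)
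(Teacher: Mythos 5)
Your proof is correct: the paper states \Cref{L:min semi-arithmetic} without proof as an immediate consequence of \Cref{D:semi-arithmetic}, and your residue-chasing verification (passing to subsequences along residues mod $mm'$, then taking the minimum of two eventually arithmetic sequences by comparing their differences, with the all-$\infty$ convention handled separately) is exactly the routine argument being left to the reader. No gaps.
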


\begin{prop}\label{L:usa limit}
Let $(a_k)_k$ be a eventually finite uniformly semi-arithmetic sequence with period $m > 0$ and difference $d \geq 0$.
Then
$$\lim_{k \to \infty} \frac{a_k}{k}$$
exists and is equal to $\frac{d}{m} \in \B Q$. \qed
\end{prop}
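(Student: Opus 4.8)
The plan is to unwind uniform semi-arithmeticity into finitely many affine formulae and estimate directly. First I would note that, since there are only $m$ residue classes modulo $m$, it suffices to understand the $m$ sequences $(a_{mk+n})_k$ for $n \in \{0, 1, \ldots, m-1\}$; by \Cref{D:semi-arithmetic}(4) each is eventually arithmetic with difference $d$, and since $(a_k)_k$ is eventually finite, each is eventually a sequence in $\B N$. An arithmetic progression whose sufficiently large terms all lie in $\B N$ cannot have difference $\infty$, so $d \in \B N$; this is the sole use of the eventual-finiteness hypothesis, and it is what guarantees that $d/m$ is a genuine element of $\B Q$ rather than a formal symbol.

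Next, for each such $n$ I would fix $K_n$ large enough that $a_{mk+n} = dk + c_n$ for all $k \geq K_n$, where $c_n \col= a_{mK_n + n} - dK_n \in \B Z$. Put $K \col= \max_n K_n$ and $C \col= \max_n |c_n|$, both finite maxima over the $m$ residues. Any integer $j \geq mK$ has a unique expression $j = mk + n$ with $0 \leq n < m$ and $k \geq K$, and then
$$\left| a_j - \frac{d}{m}\,j \right| = \left| dk + c_n - \frac{d}{m}(mk+n) \right| = \left| c_n - \frac{dn}{m} \right| \leq C + d.$$
Dividing by $j$ yields $\bigl| a_j / j - d/m \bigr| \leq (C + d)/j$, which tends to $0$ as $j \to \infty$, so $\lim_{j\to\infty} a_j/j$ exists and equals $d/m \in \B Q$, as claimed.

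The whole argument is bookkeeping, so there is no real obstacle. The one point requiring a little care is not to discard the eventual-finiteness hypothesis: without it $d$ could equal $\infty$, some of the progressions $(a_{mk+n})_k$ could begin with a string of $\infty$'s, and the displayed bound would be vacuous; with it, everything reduces to the elementary fact that $(dk + c)/(mk + n) \to d/m$.
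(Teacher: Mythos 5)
Your proof is correct: the paper states this proposition without proof (as an immediate consequence of \Cref{D:semi-arithmetic}), and your residue-class decomposition with the bound $|a_j - \frac{d}{m}j| \leq C + d$ is precisely the routine verification being left to the reader. The only slight quibble is that in the paper's conventions the difference $d$ is already a finite natural number; the real role of eventual finiteness is, as you also note, to rule out residue classes that are constantly $\infty$, so that $a_j/j$ is eventually a genuine real number converging to $d/m$.
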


\begin{lemma}\label{L:usa criterion}
Let $(a_k)_k$ be semi-arithmetic.
Suppose there exists $D \geq 0$ such that $a_{k+1} \leq a_k +D$ for all $k \geq 0$.
Then $(a_k)_k$ is uniformly semi-arithmetic.
\end{lemma}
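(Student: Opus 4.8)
The plan is to exploit the period-$m$ structure coming from semi-arithmeticity: it gives $m$ eventually-arithmetic subsequences $(a_{mk+n})_k$, $0\le n\le m-1$, with differences $d_n$, and (via \Cref{helpfulrem}) only these finitely many differences occur. To upgrade to \emph{uniform} semi-arithmeticity with period $m$ it suffices to show $d_0=d_1=\cdots=d_{m-1}$, and the bounded-increment hypothesis is exactly what forces this.

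First I would dispose of the degenerate case. If $(a_k)_k$ is not eventually finite, then $a_j=\infty$ for infinitely many $j$; but $a_{k+1}\le a_k+D$ with $D$ finite means that once a term is finite all later terms are finite, so in fact $a_k=\infty$ for every $k$. Such a sequence is uniformly semi-arithmetic with period $m$ and difference $0$ (the constant $\infty$ sequence being arithmetic with difference $0$), so we are done in that case. Hence from now on assume $a_k$ is finite for all $k$ past some $N$.

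Next I would pin down explicit linear formulas. For each residue $n\in\{0,\dots,m-1\}$ the sequence $(a_{mk+n})_k$ is eventually arithmetic with difference $d_n$, and being eventually finite as well, $d_n$ is an honest natural number. Choosing $K$ large enough that all $m$ of these subsequences are simultaneously in their arithmetic, finite-valued regime for $k\ge K$, I obtain constants $c_0,\dots,c_{m-1}\in\B Z$ with $a_{mk+n}=c_n+k\,d_n$ for all $k\ge K$ and $0\le n\le m-1$, and likewise $a_{m(k+1)}=c_0+(k+1)d_0$.

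The crux is then a single comparison, repeated. Applying $a_{j+1}\le a_j+D$ with $j=mk+n$ for $0\le n\le m-2$ yields $(d_{n+1}-d_n)k\le c_n-c_{n+1}+D$ for all $k\ge K$; since the right-hand side is constant this forces $d_{n+1}\le d_n$. The same comparison across a block boundary, $j=mk+(m-1)$, gives $(d_0-d_{m-1})k\le c_{m-1}-c_0-d_0+D$, hence $d_0\le d_{m-1}$. Chaining, $d_0\le d_{m-1}\le\cdots\le d_1\le d_0$, so all $d_n$ equal a common value $d$; for arbitrary $n\ge 0$, writing $n\equiv r\pmod m$, the sequence $(a_{mk+n})_k$ is a tail of $(a_{mj+r})_j$ and hence eventually arithmetic with difference $d_r=d$, which is precisely uniform semi-arithmeticity with period $m$ and difference $d$. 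I do not anticipate a genuine obstacle; the only thing needing care is the $\widehat{\B N}$-valued bookkeeping, namely using the bounded-increment hypothesis up front to ensure the $d_n$ are finite so that the comparison inequalities are not vacuous.
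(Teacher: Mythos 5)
Your proof is correct and follows essentially the same route as the paper: reduce to the eventually finite case via the bounded-increment hypothesis, use that hypothesis to force $d_{n+1}\le d_n$, and close the cyclic chain (your wrap-around comparison plays the role of the paper's appeal to \Cref{helpfulrem}, i.e.\ $d_{n+m}=d_n$) to conclude all differences coincide. The only cosmetic difference is that you work with explicit linear formulas $a_{mk+n}=c_n+kd_n$, while the paper argues directly that the difference sequence $(a_{mk+n+1}-a_{mk+n})_k$ is eventually arithmetic with difference $d_{n+1}-d_n$ and bounded by $D$.
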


\begin{proof}
If $a_k=\infty$ for all $k\geq 0$, the result is immediate. Otherwise, $(a_k)_k$ is eventually finite. Hence, without loss of generality, assume $a_k<\infty$ for all $k$.

Let $(a_k)_k$ have period $m > 0$. By \Cref{D:semi-arithmetic}, for all $n \geq 0$, the sequences $(a_{mk+n})_k$ are arithmetic with some difference $d_n \geq 0$. By the hypothesis, the sequence $(a_{mk+n+1}-a_{mk+n})_k$ is bounded above by $D$. However, observe that  $(a_{mk+n+1}-a_{mk+n})_k$ is eventually arithmetic with difference $d_{n+1}-d_n$. Therefore, $d_{n+1}-d_n \leq 0$, and hence by \Cref{helpfulrem}, $$d_0 \geq d_1 \geq \hdots \geq d_m = d_0.$$
Hence by \Cref{helpfulrem}, $(d_n)_n$ is constant, and $(a_k)_k$ is uniformly semi-arithmetic.
\end{proof}

\section{Semilinear sets}

Let $\B N^X$ denote the free abelian monoid over $X$, or equivalently, the set of all functions $X \to \B N$.

\begin{defn}[Compare {\cite[Definition~3]{tame}}]\label{D:semilinear}
A set $\Omega \subseteq \B N^X$ is called \textit{linear} if it is of the form $v + M$ where $v \in \B N^X$ and $M \subseteq \B N^X$ is a finitely-generated submonoid of $\B N^X$.
It called is \textit{semilinear} if it is a union of finitely many linear subsets of $\B N^X$.
\end{defn}

The following propositions are immediate consequences of \Cref{D:semilinear}:

\begin{prop}\label{L:homomorphic image semilinear}
Let $\phi \colon \B N^X \to \B N^Y$ be an homomorphism. Then if $\Omega \subseteq \B N^X$ is semilinear, $\phi(\Omega) \subseteq \B N^Y$ is also semilinear. \qed
\end{prop}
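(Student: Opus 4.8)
The plan is to reduce to the case of a single linear set and then push the monoid generators through $\phi$. First I would note that taking images commutes with unions: if $\Omega = \bigcup_{i=1}^{k}(v_i + M_i)$ is a finite union of linear sets as in \Cref{D:semilinear}, then $\phi(\Omega) = \bigcup_{i=1}^{k}\phi(v_i + M_i)$. Hence it suffices to prove that the homomorphic image of a single linear set is linear, since a finite union of linear sets is semilinear by definition.

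Next I would fix a linear set $v + M$, where $v \in \B N^X$ and $M$ is the submonoid of $\B N^X$ generated by finitely many elements $u_1, \dots, u_r$. Every element of $v + M$ has the form $v + \sum_{j=1}^{r} n_j u_j$ with $n_j \in \B N$. Applying $\phi$ and using that it is a monoid homomorphism gives $\phi\bigl(v + \sum_{j=1}^{r} n_j u_j\bigr) = \phi(v) + \sum_{j=1}^{r} n_j\, \phi(u_j)$, so $\phi(v + M) = \phi(v) + M'$, where $M'$ is the submonoid of $\B N^Y$ generated by $\phi(u_1), \dots, \phi(u_r)$. Since $M'$ is finitely generated, $\phi(v) + M'$ is linear, which completes the argument.

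I do not expect any real obstacle here: the result is immediate from unravelling \Cref{D:semilinear}, which is precisely why it is stated with a \qed in the excerpt. The only point needing (entirely routine) care is that a homomorphism distributes over the finite $\B N$-linear combinations describing $M$, so that the image of a finitely generated submonoid is again finitely generated; everything else is bookkeeping with finite unions.
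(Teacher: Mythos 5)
Your proof is correct and is exactly the routine unravelling of \Cref{D:semilinear} that the paper leaves implicit (the proposition is stated there without proof as an immediate consequence of the definition): images commute with finite unions, and a homomorphism sends $v+\langle u_1,\dots,u_r\rangle$ to $\phi(v)+\langle\phi(u_1),\dots,\phi(u_r)\rangle$, which is again linear. Nothing further is needed.
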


\begin{prop}\label{products}
If $\Omega_X \subseteq \B N^X$ and $\Omega_Y \subseteq \B N^Y$ are semilinear, then $\Omega_X \times \Omega_Y \subseteq \B N^X \times \B N^Y$ is also semilinear. \qed
\end{prop}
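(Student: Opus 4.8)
The plan is to distribute the Cartesian product over the defining unions and then reduce to a purely monoid-theoretic observation. First I would invoke \Cref{D:semilinear} to write $\Omega_X = \bigcup_{i=1}^{p}(v_i + M_i)$ and $\Omega_Y = \bigcup_{j=1}^{q}(w_j + N_j)$, where each $v_i \in \B N^X$, each $w_j \in \B N^Y$, and each $M_i \subseteq \B N^X$, $N_j \subseteq \B N^Y$ is a finitely generated submonoid. Since $\Omega_X \times \Omega_Y = \bigcup_{i,j}\big((v_i + M_i)\times(w_j + N_j)\big)$ is a union of $pq$ sets, \Cref{D:semilinear} tells us it suffices to prove that an individual set of the form $(v+M)\times(w+N)$ is linear in $\B N^X\times\B N^Y$.

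For this step I would equip $\B N^X \times \B N^Y$ with its coordinatewise monoid structure, under which $(v+M)\times(w+N) = (v,w) + (M\times N)$, where $(v,w)\in\B N^X\times\B N^Y$ and $M\times N = \{(x,y): x\in M,\ y\in N\}$ is a submonoid. The only thing left to check is that $M\times N$ is finitely generated: if $M$ is generated by $a_1,\dots,a_s$ and $N$ by $b_1,\dots,b_t$, then every element of $M\times N$ is a non-negative integer combination of $(a_1,0),\dots,(a_s,0),(0,b_1),\dots,(0,b_t)$, so this finite set generates $M\times N$. Hence each $(v,w)+(M\times N)$ is linear, and $\Omega_X\times\Omega_Y$ is a finite union of linear sets, i.e.\ semilinear.

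There is no genuine obstacle here — the statement is essentially formal, which is presumably why it is stated without proof. The only mild care needed is the bookkeeping: recording that a product of finitely generated monoids is finitely generated under the evident identification of $\B N^X\times\B N^Y$ with the free abelian monoid on the disjoint union of $X$ and $Y$, after which the conclusion drops straight out of \Cref{D:semilinear}.
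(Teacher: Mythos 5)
Your argument is correct and is exactly the routine verification the paper leaves implicit by stating the proposition as an immediate consequence of \Cref{D:semilinear}: distribute the product over the finite unions, and note that $(v+M)\times(w+N)=(v,w)+(M\times N)$ with $M\times N$ finitely generated under the identification $\B N^X\times\B N^Y\cong\B N^{X\sqcup Y}$. Nothing further is needed.
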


We also state the following seminal result in the study of semilinear sets.

\begin{lemma}[Liu-Weiner, {\cite[Theorem 1]{tame}}]\label{T:intersection semilinear}
The family of semilinear subsets of $\B N^X$ is closed under intersections. \qed
\end{lemma}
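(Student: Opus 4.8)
The plan is to reduce closure under intersection to a statement about nonnegative integer solutions of linear Diophantine systems, and then to settle that statement with Dickson's lemma. Since intersection distributes over unions and a finite union of semilinear sets is again semilinear, it suffices to treat the intersection of two \emph{linear} sets. Accordingly, write $\Omega = v + \langle p_1,\ldots,p_r\rangle$ and $\Omega' = v' + \langle q_1,\ldots,q_s\rangle$ with $v,v',p_i,q_j \in \B N^X$, and aim to show $\Omega \cap \Omega'$ is semilinear.

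First I would identify the intersection with a homomorphic image of a solution set. An element $u$ lies in $\Omega \cap \Omega'$ exactly when there is some $(a_1,\ldots,a_r,b_1,\ldots,b_s)\in \B N^{r+s}$ with $u = v + \sum_i a_i p_i = v' + \sum_j b_j q_j$; equivalently $(a,b)$ lies in
$$\Sigma \col= \Big\{(a,b)\in \B N^{r+s} : \sum_i a_i p_i - \sum_j b_j q_j = v'-v\Big\},$$
an equality taken in $\B Z^X$, and $u = v + \pi(a,b)$, where $\pi\colon \B N^{r+s}\to \B N^X$ is the monoid homomorphism with $\pi(a,b) = \sum_i a_i p_i$. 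Thus $\Omega\cap\Omega' = v + \pi(\Sigma)$. Granting that $\Sigma$ is semilinear, \Cref{L:homomorphic image semilinear} makes $\pi(\Sigma)$ semilinear, and translating by the fixed vector $v$ preserves semilinearity since it only alters the base vector of each linear piece; this would complete the argument.

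The crux is therefore the semilinearity of $\Sigma$. Fixing a bijection $\B N^X \cong \B N^{|X|}$ (here $X$ is finite), the defining condition becomes an integer linear system $A\B x = c$ with $c = v'-v$, so $\Sigma = \{\B x \in \B N^{r+s} : A\B x = c\}$. I would first handle the homogeneous monoid $H \col= \{\B x \in \B N^{r+s} : A\B x = 0\}$: by Dickson's lemma its componentwise-minimal nonzero elements form a finite set $h_1,\ldots,h_t$, and these generate $H$, for if $\B x\in H$ is nonzero then $h_i \leq \B x$ for some $i$, whence $\B x - h_i \in H$ and induction on $\sum_k x_k$ applies. Hence $H$ is a finitely generated submonoid. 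Applying Dickson's lemma once more to the minimal elements $w_1,\ldots,w_p$ of $\Sigma$ (with $\Sigma=\varnothing$, and so semilinear, if there are none), one has $w_k + H \subseteq \Sigma$ because $A(w_k+h)=c$, while conversely any $\B x \in \Sigma$ dominates some $w_k$ and then $\B x - w_k \in H$. Therefore $\Sigma = \bigcup_{k=1}^p (w_k + H)$ is a finite union of linear sets, hence semilinear.

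I expect the main obstacle to be exactly this last step — the semilinearity of $\Sigma$ — and within it the two appeals to the well-quasi-ordering of $\B N^{r+s}$. Once the relevant antichains of minimal elements are known to be finite, both the finite generation of $H$ and the decomposition $\Sigma=\bigcup_k(w_k+H)$ follow formally; the surrounding reductions (distributing intersection over unions, and transporting $\Sigma$ through $\pi$ and the translation by $v$) are routine given \Cref{L:homomorphic image semilinear}.
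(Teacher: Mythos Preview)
Your argument is correct. The paper, however, does not supply its own proof of this lemma: it is quoted from the literature (Liu--Weiner) and marked with a terminal \qed, so there is no in-paper argument to compare against. What you have written is essentially the classical proof: reduce to two linear sets, encode the intersection as the nonnegative solution set of an integer linear system, and use Dickson's lemma twice to exhibit that solution set as a finite union of cosets of a finitely generated submonoid. The only point worth flagging is that you silently assume $X$ is finite when passing to a linear system $A\B x=c$; in the paper this is implicit (the identification of $\B N^X$ with the free abelian monoid and with all functions $X\to\B N$ only agree for finite $X$), so the assumption is harmless here.
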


\begin{example}\label{E:diagonal}
Let $X$ be a finite set, and let $\mathbf{1}_X \colon X \to \B N$ denote the characteristic function of $X$. Then, define the \textit{diagonal over $X$} to be the set $\Delta_X \col= \{k \cdot \mathbf{1}_X : k \in \B N\} \subseteq \B N^X$. In particular, observe that $\Delta_X$ is semilinear. 
\end{example}

\begin{defn}\label{D:Env}
Let $\Omega \subseteq \B N^2$.
For $k \in \B N$, let $\Omega_{(k)} \col= \{ \ell : (k,\ell) \in \Omega\}$.
Then the \textit{lower envelope} of the set $\Omega$ is the sequence
$$\OP{Env}(\Omega) \col= (\inf \Omega_{(k)})_k,$$
where, by convention, the infimum of the empty set is $\infty$.
\end{defn}

The following proposition follows immediately from \Cref{D:Env}.

\begin{prop}\label{L:Env of union}
Let $\Omega, \Omega' \subseteq \B N^2$, and let $(a_k)_k=\OP{Env}(\Omega)$ and $(a'_k)_k=\OP{Env}(\Omega')$ respectively.
Then $\OP{Env}(\Omega \cup \Omega') = (\min\{a_k,a'_k\})_k$. \qed
\end{prop}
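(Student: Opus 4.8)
The plan is to reduce the statement to the elementary observation that, in $\widehat{\B N}$ equipped with the convention $\inf\emptyset=\infty$, the infimum distributes over finite unions. First I would fix $k\in\B N$ and note that passing to the $k$-th slice commutes with taking unions: directly from \Cref{D:Env} we have $(\Omega\cup\Omega')_{(k)}=\{\ell:(k,\ell)\in\Omega\cup\Omega'\}=\Omega_{(k)}\cup\Omega'_{(k)}$, so the $k$-th term of $\OP{Env}(\Omega\cup\Omega')$ is $\inf\bigl(\Omega_{(k)}\cup\Omega'_{(k)}\bigr)$.

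Next I would check that $\inf(A\cup B)=\min\{\inf A,\inf B\}$ for arbitrary $A,B\subseteq\B N$. If $A$ and $B$ are both non-empty this is the standard property of the total order on $\B N$; if exactly one of them is empty, say $B=\emptyset$, then both sides equal $\inf A$, since $\inf\emptyset=\infty$ is the top element of $\widehat{\B N}$ and is therefore absorbed by the minimum; and if both are empty, both sides equal $\infty$. Applying this with $A=\Omega_{(k)}$ and $B=\Omega'_{(k)}$ yields $\inf(\Omega\cup\Omega')_{(k)}=\min\{\inf\Omega_{(k)},\inf\Omega'_{(k)}\}=\min\{a_k,a'_k\}$, which is exactly the $k$-th term of $(\min\{a_k,a'_k\})_k$. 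Since $k$ was arbitrary, the two sequences agree.

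I do not expect any genuine obstacle here; the only point requiring a little care is the bookkeeping around the convention $\inf\emptyset=\infty$, which is precisely what makes the identity hold uniformly in $k$ irrespective of whether the slices $\Omega_{(k)}$ and $\Omega'_{(k)}$ happen to be empty. This is why the proposition is presented as an immediate consequence of \Cref{D:Env}.
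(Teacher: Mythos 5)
Your argument is correct and is exactly the routine verification the paper leaves implicit when it states the proposition as an immediate consequence of \Cref{D:Env}: slicing at $k$ commutes with union, and $\inf(A\cup B)=\min\{\inf A,\inf B\}$ holds in $\widehat{\B N}$ with the convention $\inf\emptyset=\infty$. Nothing further is needed.
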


\begin{prop}\label{L:discard zeros for Env}
Let $\Omega$ be the submonoid of $\B N^2$ generated by a finite set $V=\{(x_\alpha,y_\alpha)\}_{\alpha \in A} \subseteq \B N^2$.
Set $A'\col=\{ \alpha \in A : x_\alpha \neq 0\}$ and $\Omega'$ be the submonoid generated by $\{(x_\alpha,y_\alpha) \}_{\alpha \in A'}$.
Then $\OP{Env}(\Omega)=\OP{Env}(\Omega')$.
\end{prop}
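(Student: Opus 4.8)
The plan is to prove the equality of the two lower envelopes fibrewise: for each fixed $k \in \B N$ I will show that $\inf \Omega_{(k)} = \inf \Omega'_{(k)}$, which by \Cref{D:Env} is exactly the assertion $\OP{Env}(\Omega) = \OP{Env}(\Omega')$. One inequality is essentially free: since $\Omega'$ is generated by a subset of the generators of $\Omega$, we have $\Omega' \subseteq \Omega$, hence $\Omega'_{(k)} \subseteq \Omega_{(k)}$ and therefore $\inf \Omega_{(k)} \leq \inf \Omega'_{(k)}$. This inclusion also handles the degenerate case in which both fibres are empty, where both infima equal $\infty$ by convention.

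For the reverse inequality I would take an arbitrary point $(k, \ell) \in \Omega$ and write $(k,\ell) = \sum_{\alpha \in A} c_\alpha (x_\alpha, y_\alpha)$ with coefficients $c_\alpha \in \B N$. Reading off the first coordinate and using that $x_\alpha = 0$ for every $\alpha \in A \setminus A'$ gives $k = \sum_{\alpha \in A'} c_\alpha x_\alpha$. Setting $\ell' \col= \sum_{\alpha \in A'} c_\alpha y_\alpha$, the point $(k, \ell') = \sum_{\alpha \in A'} c_\alpha (x_\alpha, y_\alpha)$ lies in $\Omega'$, and $\ell' \leq \ell$ because the discarded contributions $c_\alpha y_\alpha$ for $\alpha \in A \setminus A'$ are non-negative. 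Thus every point of $\Omega_{(k)}$ dominates a point of $\Omega'_{(k)}$; in particular $\Omega'_{(k)}$ is non-empty whenever $\Omega_{(k)}$ is, and $\inf \Omega'_{(k)} \leq \inf \Omega_{(k)}$.

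Combining the two inequalities yields $\inf \Omega_{(k)} = \inf \Omega'_{(k)}$ for every $k$, which is the claim. I do not expect any real obstacle here: the whole content is the elementary observation that erasing generators with vanishing first coordinate changes no first coordinate of any monoid element while it can only lower second coordinates, so it cannot raise any fibrewise infimum; the only point demanding a little care is the bookkeeping around empty fibres and the convention $\inf \emptyset = \infty$, which is dispatched uniformly by the inclusion $\Omega' \subseteq \Omega$.
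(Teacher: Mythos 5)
Your proof is correct and follows essentially the same route as the paper: both use the inclusion $\Omega' \subseteq \Omega$ for one inequality and, for the other, drop the coefficients of generators with vanishing first coordinate to produce an element of $\Omega'_{(k)}$ with smaller second coordinate (the paper does this for a representation realising $\inf\Omega_{(k)}$, you do it for an arbitrary element, which is the same idea). No gaps; the handling of empty fibres via $\inf\emptyset=\infty$ matches the paper's.
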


\begin{proof}
Since $\Omega' \subseteq \Omega$, it follows that $\inf \Omega_{(k)} \leq \inf \Omega'_{(k)}$.
If $\inf \Omega_{(k)}=\infty$, then clearly $\inf \Omega_{(k)} = \inf \Omega_{(k)} '$.
Otherwise, without loss of generality, let $k \geq 1$ be such that $\inf \Omega_{(k)}< \infty$.
Then, there exists $\lambda_\alpha \in \B N$ for all $\alpha \in A$ such that $\sum_{\alpha \in A} \lambda_\alpha x_\alpha = k$ and $\sum_{\alpha \in A} \lambda_\alpha y_\alpha = \inf \Omega_{(k)}$.
Since $\sum_{\alpha \in A'} \lambda_\alpha y_\alpha$ is an element of $\Omega'_{(k)}$, and $\sum_{\alpha \in A'} \lambda_\alpha y_\alpha \leq \sum_{\alpha \in A} \lambda_\alpha y_\alpha$, by minimality, it follows that $\inf \Omega'_{(k)} \leq \inf \Omega_{(k)}$ for all $k$, from which equality follows.
\end{proof}

We now state the following classical lemma.

\begin{lemma}[Frobenius, {\cite[Theorem~3.15.2]{genfunc}}] \label{L:coins prop}
Let $x_1,\hdots,x_n \in \B N$, and let $g = \gcd(x_1, \hdots, x_n)$.
Then there exists $N \geq 0$ such that for all $k \geq N$ with $g \ | \ k$, there exists $\lambda_1,\hdots,\lambda_n \in \B N$ such that
\[
\pushQED{\qed} 
k=\sum_i \lambda_i x_i. \qedhere
\popQED
\]   
\end{lemma}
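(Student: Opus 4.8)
The plan is to deduce this from the classical two-generator ``coin problem'' by first eliminating degeneracies and then rescaling. First I would dispose of the case in which every $x_i=0$: then $g=0$, the only $k$ with $g\mid k$ is $k=0$, and $k=0=\sum_i 0\cdot x_i$, so $N=0$ works. Otherwise I discard those $x_i$ equal to $0$, as they affect neither $g$ nor any representation, so I may assume every $x_i\geq 1$. Dividing each $x_i$ and each admissible $k$ by $g$, it suffices to treat the case $g=\gcd(x_1,\dots,x_n)=1$, since from $k/g=\sum_i\lambda_i(x_i/g)$ one recovers $k=\sum_i\lambda_i x_i$.

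So it remains to show that, when $x_1,\dots,x_n\geq 1$ with $\gcd=1$, every sufficiently large $k\in\B N$ lies in the numerical semigroup they generate; assume $n\geq 2$, the case $n=1$ forcing $x_1=1$ and being trivial. The key step is a residue-class argument modulo $x_1$. Since $\overline{x_1 x_i}=0$ in $\B Z/x_1\B Z$, each $\bar x_i$ has order dividing $x_1$, so the subgroup of $\B Z/x_1\B Z$ generated by $\bar x_2,\dots,\bar x_n$ equals $d\,\B Z/x_1\B Z$ with $d=\gcd(x_1,x_2,\dots,x_n)=1$, i.e.\ all of $\B Z/x_1\B Z$. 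Hence for each residue $r\in\{0,1,\dots,x_1-1\}$ one may pick $\mu^{(r)}_2,\dots,\mu^{(r)}_n\in\{0,1,\dots,x_1-1\}$ with $c_r\col=\sum_{i\geq 2}\mu^{(r)}_i x_i\equiv r\pmod{x_1}$; then $0\leq c_r\leq(x_1-1)(x_2+\dots+x_n)$, so $N\col=\max_{r}c_r$ is finite. For any $k\geq N$, taking $r$ with $0\leq r<x_1$ and $r\equiv k\pmod{x_1}$, we have $k-c_r\geq 0$ and $x_1\mid(k-c_r)$; writing $q\col=(k-c_r)/x_1\in\B N$ gives $k=q\,x_1+\sum_{i\geq 2}\mu^{(r)}_i x_i$, the required representation.

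I do not expect a serious obstacle, since this is the classical Sylvester--Frobenius fact; an alternative route is induction on $n$, combining the two-variable bound $(x_1-1)(x_2-1)$ with B\'ezout's identity to pass from $\{x_1,\dots,x_{n-1}\}$ with $\gcd g'$ to $\{x_1,\dots,x_n\}$ with $\gcd(g',x_n)=g$. The only point that needs care — and the reason the statement is phrased for $k$ with $g\mid k$ rather than for all $k$ — is bookkeeping the two reductions above (discarding zero generators, then dividing through by $g$) before invoking the $\gcd=1$ case.
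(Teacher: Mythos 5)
Your proof is correct. Note, however, that the paper does not prove this lemma at all: it is quoted as a classical result (Frobenius), with a citation to Wilf's generatingfunctionology, and immediately marked with a \qed. So the comparison is between a citation and your self-contained argument. Your route is the standard elementary one: after disposing of the degenerate all-zero case, discarding zero generators, and dividing through by $g$ to reduce to $\gcd(x_1,\dots,x_n)=1$, you argue modulo $x_1$ that every residue class $r$ is hit by a bounded non-negative combination $c_r=\sum_{i\geq 2}\mu^{(r)}_i x_i$, and then absorb the remaining multiple of $x_1$. The one point that needs (and gets) justification is the passage from the subgroup generated by $\bar x_2,\dots,\bar x_n$ in $\B Z/x_1\B Z$ to \emph{non-negative} combinations with coefficients in $\{0,\dots,x_1-1\}$; this is legitimate precisely because $x_1\bar x_i=0$, so integer coefficients may be reduced modulo $x_1$ without changing the residue, which is the observation you make. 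As a bonus, your argument yields an explicit threshold $N\leq(x_1-1)(x_2+\cdots+x_n)$ (before rescaling by $g$), which is more than the paper needs --- only the existence of some $N$ is used in \Cref{P:Env of submonoid} --- but is a perfectly good, and arguably preferable, self-contained substitute for the citation.
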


We arrive at the main result of this section, whose proof is adapted and simplified from \cite[Theorem, p. 329]{greenberg}.

\begin{lemma}\label{P:Env of submonoid}
Let $\Omega \subseteq \B N^2$ be the monoid generated by a finite set $V \subseteq \B N^2$.
Then $\OP{Env}(\Omega)$ is uniformly semi-arithmetic.
\end{lemma}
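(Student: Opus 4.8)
The plan is to analyze $\OP{Env}(\Omega)$ directly from the structure of the generating set $V = \{(x_\alpha, y_\alpha)\}_{\alpha \in A}$. First I would invoke \Cref{L:discard zeros for Env} to assume without loss of generality that $x_\alpha \geq 1$ for every generator, so that $\Omega_{(k)}$ is nonempty precisely when $k$ lies in the numerical semigroup generated by $\{x_\alpha\}$. By \Cref{L:coins prop}, letting $g = \gcd_\alpha x_\alpha$, there is an $N$ such that $\Omega_{(k)} \neq \emptyset$ for all $k \geq N$ with $g \mid k$, and $\Omega_{(k)} = \emptyset$ for $g \nmid k$; so the period will be $m = g$ (times possibly a refinement coming from the slopes), and on each residue class $n \bmod g$ with $0 \leq n < g$ the sequence $(\inf \Omega_{(gk+n)})_k$ is eventually a sequence of genuine natural numbers, which matches the ``eventually finite'' requirement.

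The substance is to show that on each residue class the lower envelope is eventually arithmetic. The geometric picture is that $\Omega$ is the set of non-negative integer combinations of the finitely many vectors $(x_\alpha, y_\alpha)$, a ``staircase'' cone, and $\inf \Omega_{(k)}$ is the lowest lattice point of $\Omega$ on the vertical line at abscissa $k$. Intuitively the minimizing combinations eventually use, as bulk, only the generator(s) with the smallest ratio $y_\alpha / x_\alpha$ — call this minimal slope $\mu$ — so that $\inf\Omega_{(k)}$ grows like $\mu k$ up to a bounded correction that is eventually periodic in $k$. To make this precise I would argue: pick a generator $(x_0, y_0)$ achieving the minimal slope $\mu = y_0/x_0$. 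For any $(k, \ell) \in \Omega$ realizing the infimum, write $\ell = \sum_\alpha \lambda_\alpha y_\alpha$ and $k = \sum_\alpha \lambda_\alpha x_\alpha$; replacing $x_0$-worth of any other generator by the appropriate multiple of $(x_0, y_0)$ never increases $\ell$, so one can push almost all the ``mass'' onto $(x_0, y_0)$, leaving a residual combination of the other generators whose total $x$-coordinate is bounded by a constant $C$ (depending only on $V$). There are only finitely many such residual combinations, hence finitely many possible values of the pair (residual $x$-coordinate, residual $y$-coordinate); for large $k$, $\inf\Omega_{(k)}$ is the minimum, over residuals $(r, s)$ with $r \equiv k \pmod{x_0}$ and $r \leq C$, of $s + \mu(k - r)$. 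For $k$ large this minimum is attained, for each residue of $k$ mod $x_0$, at a fixed optimal residual, giving $\inf\Omega_{(k)} = \mu k + (\text{const depending on } k \bmod x_0)$, which is exactly an eventually arithmetic sequence on each residue class mod $x_0$ (after clearing denominators, mod $\OP{lcm}$ of $x_0$ and the denominator of $\mu$, i.e.\ mod some explicit $m$ dividing a multiple of $x_0$).

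Combining the two regimes — the empty-fibre residue classes mod $g$, which give the constant $\infty$ sequence (arithmetic with any difference, as noted before \Cref{D:semi-arithmetic}), and the nonempty ones, which are eventually arithmetic with common difference determined by $\mu$ and $m$ — shows $\OP{Env}(\Omega)$ is semi-arithmetic with period $m$, where $m$ is a suitable common multiple of $g$ and the relevant $x_0$/slope denominators. Finally, uniformity follows without extra work from \Cref{L:usa criterion}: it suffices to exhibit a constant $D$ with $\inf\Omega_{(k+1)} \leq \inf\Omega_{(k)} + D$, and one can take $D = \max_\alpha(x_\alpha - 1) \cdot (\text{something}) $ — concretely, realize $k$ by a combination and then pass to $k+1$ by adjusting generators, which changes the $y$-coordinate by a bounded amount; alternatively use that adding the generator of minimal slope repeatedly fills a bounded-length interval of abscissae. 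Thus the combination of semi-arithmeticity plus the bounded-increment bound yields uniform semi-arithmeticity.

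The main obstacle I anticipate is the bookkeeping in the ``push the mass onto the minimal-slope generator'' step: one must make sure that the residual combination really does have bounded $x$-coordinate \emph{and} that the $y$-coordinate one reads off is genuinely the infimum (not just an upper bound), which requires a matching lower bound showing no combination can do better than $\mu k - O(1)$ — but that lower bound is immediate since every generator has slope at least $\mu$, so $\ell = \sum \lambda_\alpha y_\alpha \geq \mu \sum \lambda_\alpha x_\alpha = \mu k$. Getting the periodicity modulus exactly right (reconciling $g$, $x_0$, and the denominator of $\mu$ into a single period $m$) is the other fiddly point, but it is purely arithmetic.
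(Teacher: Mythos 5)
Your main construction is essentially a variant of the paper's argument and it works: discard generators with zero abscissa via \Cref{L:discard zeros for Env}, single out a generator $(x_0,y_0)$ of minimal slope $\mu=y_0/x_0$, get the lower bound $\inf\Omega_{(k)}\geq \mu k$ because every generator has slope at least $\mu$, and show that any realizing combination can be swapped (replace $x_0$ copies of $(x_\alpha,y_\alpha)$ by $x_\alpha$ copies of $(x_0,y_0)$, never increasing the ordinate) into one whose non-$(x_0,y_0)$ residual has bounded abscissa; finitely many residuals then give $\inf\Omega_{(k)}=\mu k+c_{k\bmod x_0}$ for large $k$ on the nonempty classes. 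The paper reaches the same conclusion more economically, proving $a_{k+x_0}\leq a_k+y_0$ and $a_k\geq\mu k$ and letting a decreasing nonnegative sequence stabilize, but your residual bookkeeping is a legitimate substitute. Note also that your ``clearing denominators'' worry is vacuous: over one period $x_0$ the increment is $\mu x_0=y_0\in\B N$, so the period can simply be taken to be $x_0$ and the difference $y_0$.

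The genuine gap is the final step, where you derive uniformity from \Cref{L:usa criterion}. That lemma requires a constant $D$ with $a_{k+1}\leq a_k+D$ for \emph{all} $k$, and no such $D$ exists as soon as some fibres $\Omega_{(k)}$ are empty: already for $V=\{(2,1)\}$ one has $a_{2m}=m$ but $a_{2m+1}=\infty$, and this happens precisely when $\gcd_\alpha x_\alpha>1$, i.e.\ exactly the case your argument must cover (in the paper, \Cref{L:usa criterion} is applied only later, to $(\|w_1^k\cdots w_n^k\|_{\C L})_k$, where the bounded-increment hypothesis genuinely holds). Fortunately the repair is already contained in what you proved: with period $x_0$, every residue class with nonempty fibres is eventually arithmetic with the \emph{same} difference $y_0$, and every class with empty fibres is the constant $\infty$ sequence, which by the convention stated before \Cref{D:semi-arithmetic} is arithmetic with difference $y_0$ as well. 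So $\OP{Env}(\Omega)$ is uniformly semi-arithmetic with period $x_0$ and difference $y_0$ directly, without invoking \Cref{L:usa criterion}; you should delete that appeal rather than try to salvage a bounded-increment constant $D$, which does not exist.
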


\begin{proof}
Let $V \col= \{(x_i,y_i) : 1 \leq i \leq n\}$, and let $(a_k)_k \col= \OP{Env}(\Omega)$.
By \Cref{L:discard zeros for Env}, we may assume that $x_i \neq 0$ for all $i$, and as the result is trivial if $n=0$, we may assume $n \geq 1$. By reordering the generating set, we may also assume that 
$$
\frac{y_1}{x_1} \leq \frac{y_i}{x_i} 
$$
for all $1 \leq i \leq n$.

First, consider the case when $\gcd(x_1,\hdots,x_n)=1$.
By \Cref{L:coins prop}, there exists $N \geq 0$ such that for all $k \geq N$, $a_k < \infty$. Let $k \geq 0$ be such that $a_k < \infty$, and choose $\lambda_i \in \B N$ for all $1 \leq i \leq n$ such that $\sum_i \lambda_i x_i = k$ and $\sum_i \lambda_i y_i = a_k$. Then
\begin{equation}\label{E:Env of submonoid:1}
a_k = \sum_i \lambda_i y_i  = \sum_i \lambda_i x_i \cdot \frac{y_i}{x_i} \geq \sum_i \lambda_i x_i \cdot \frac{y_1}{x_1} = k \cdot\frac{y_1}{x_1}.
\end{equation}
Define $\lambda'_i$ by 
$$\lambda'_i \col= \begin{cases}
    \lambda_i +1 & i = 1\\
    \lambda_i & i > 1.
\end{cases}$$
Then $\sum_i \lambda'_i x_i = k+x_1$ and $\sum_i \lambda'_i y_i = a_k+y_1$.
By the minimality of $a_k$,
\begin{equation}\label{E:Env of submonoid:2}
a_{k+x_1} \leq a_k+y_1.
\end{equation}
Let $N \leq \ell < N+x_1$. Consider the sequence
$$
(b_j)_{j \geq 0} \col= \left(a_{\ell+x_1j} - (\ell+x_1j) \cdot \frac{y_1}{x_1}\right)_{j \geq 0}.
$$
Observe that $(b_j)_j$ is decreasing, since by \eqref{E:Env of submonoid:2}
$$
b_{j+1}-b_j =a_{\ell+x_1j+x_1}-a_{\ell+x_1j}-y_1 \leq 0
$$
for all $j \geq 0$. Moreover, by \eqref{E:Env of submonoid:1}, $b_j \geq 0$. Therefore $(b_j)_j$ is eventually constant.
Hence there exists a constant $c$ such that for all $j$ sufficiently large,
$$
a_{\ell+x_1j}=c+y_1j.
$$
Therefore $(a_{\ell + x_ij})_j$ is eventually arithmetic with difference $y_1$. Since $N \leq \ell < N + x_1$ was arbitrary, it follows that $(a_k)_k$ is uniformly semi-arithmetic with period $x_1$ and difference $y_1$.

In the general case, where $\gcd(x_1,\hdots,x_n) = g > 1$,
set $x'_i\col=x_i/g$, let $\Omega'$ be the submonoid generated by $V' \col= \{(x_i', y_i): 1 \leq i \leq n\}$.
Since $\gcd(x_1',\hdots,x_n')=1$ and let $(a_k')_k\col=\OP{Env}(\Omega')$. Then $(a_k)_k$ is uniformly semi-arithmetic with period $x_1'$ and difference $y_1$.
Clearly,
$$
a'_k=\left\{
\begin{array}{ll}
a_{k/g} & \text{if } g \ | \ k \\
\infty  & \text{otherwise.}
\end{array}
\right.
$$
Hence, if $g \ | \ k$, then $(a_{k+x_1j})_j = (a'_{k/g+x_1'j})_j$ is eventually arithmetic with difference $y_1$. Otherwise, $a_{k+x_1j} = \infty$ for all $j \geq 0$, and hence $(a_{k+x_1j})_j$ is an arithmetic sequence in $\widehat{\B N}$. 
It follows that $(a_k)_k$ is uniformly semi-arithmetic with period $x_1$ and difference $y_1$.
\end{proof}

\begin{cor}\label{C:Env of semilinear}
Let $\Omega \subseteq \B N^2$ be semilinear.
Then $\OP{Env}(\Omega)$ is semi-arithmetic.
\end{cor}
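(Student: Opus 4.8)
The plan is to reduce to \Cref{P:Env of submonoid} by stripping off, first, the finite union and, second, the translation vector. If $\Omega = \emptyset$ then $\OP{Env}(\Omega)$ is the constant sequence equal to $\infty$, which is arithmetic, hence semi-arithmetic; so by \Cref{D:semilinear} we may write $\Omega = \bigcup_{i=1}^{r}(v_i + M_i)$ with $r \ge 1$, each $v_i \in \B N^2$, and each $M_i \subseteq \B N^2$ a finitely-generated submonoid. Iterating \Cref{L:Env of union} gives $\OP{Env}(\Omega) = (\min_{1 \le i \le r}\{a^{(i)}_k\})_k$ where $(a^{(i)}_k)_k \col= \OP{Env}(v_i + M_i)$, and a routine induction upgrades \Cref{L:min semi-arithmetic} from two to $r$ semi-arithmetic sequences (the periods multiplying up). Hence it suffices to prove that $\OP{Env}(v + M)$ is semi-arithmetic for a single linear set $v + M$.

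Fix $v = (p,q) \in \B N^2$ and a finitely-generated submonoid $M \subseteq \B N^2$, and set $(b_k)_k \col= \OP{Env}(M)$. The next step is to express $\OP{Env}(v+M)$ through $(b_k)_k$: unwinding \Cref{D:Env}, $(v+M)_{(k)} = q + M_{(k-p)}$ for $k \ge p$ and $(v+M)_{(k)} = \emptyset$ for $k < p$, so taking infima (with $q + \infty = \infty$) gives $\OP{Env}(v+M) = (a_k)_k$ where $a_k = \infty$ for $k < p$ and $a_k = q + b_{k-p}$ for $k \ge p$. By \Cref{P:Env of submonoid}, $(b_k)_k$ is uniformly semi-arithmetic --- in particular semi-arithmetic --- with some period $m > 0$. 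It then remains to check that adding the constant $q$, translating the index by $p$, and prepending $p$ copies of $\infty$ preserve semi-arithmeticity with period $m$. Given $n \ge 0$, for all large $k$ we have $mk+n \ge p$ and $a_{mk+n} = q + b_{mk+n-p}$; writing $n - p = mt + n'$ with $t \in \B Z$ and $0 \le n' < m$, we obtain $a_{mk+n} = q + b_{m(k+t)+n'}$, which for large $k$ is a tail of the sequence $(q + b_{mj+n'})_j$ and hence eventually arithmetic. Therefore $(a_k)_k$ is semi-arithmetic with period $m$, and the corollary follows.

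The content of the argument lies entirely in \Cref{P:Env of submonoid}; the rest is bookkeeping, and the only step needing a little care is the last one, in which the index shift by $p$ permutes the residue classes modulo $m$ and introduces the possibly-negative offset $t$ together with a finite block of leading $\infty$'s, all of which must be absorbed into the word ``eventually''. I do not expect $\OP{Env}(\Omega)$ to be uniformly semi-arithmetic in general: taking the minimum of several uniformly semi-arithmetic sequences with distinct differences destroys uniformity, so semi-arithmeticity --- exactly as the statement claims --- is all one should hope to conclude at this level of generality.
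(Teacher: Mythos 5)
Your proof is correct and follows essentially the same route as the paper's: decompose the semilinear set into finitely many linear pieces, express $\OP{Env}(v+M)$ as a shifted, translated copy of $\OP{Env}(M)$ so that \Cref{P:Env of submonoid} applies, and combine the pieces via \Cref{L:Env of union} and \Cref{L:min semi-arithmetic}. The only difference is that you spell out the residue-class bookkeeping for the index shift, which the paper leaves implicit.
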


\begin{proof}
Let $M$ be a finitely generated submonoid of $\B N^2$. By \Cref{P:Env of submonoid}, $(a_k)_k=\OP{Env}(M)$ is a uniformly semi-arithmetic sequence in $\B N$.
It can be seen that if $v=(x_0,y_0) \in \B N^2$, then for the linear set $v+M$,
$$
\OP{Env}(v+M)_{k} = \begin{cases}
    \infty & \text{if } k < x_0 \\
    a_{k-x_0}+y_0 & \text{otherwise.} 
\end{cases}
$$
Therefore $\OP{Env}(v+M)$ is uniformly semi-arithmetic. Since a semilinear subset of $\B N^2$ is the union of finitely many linear sets, the result follows by repeated application of \Cref{L:Env of union} and \Cref{L:min semi-arithmetic}.
\end{proof}

Observe that, even if $(a_k)_k$ and $(a'_k)_k$ are uniformly semi-arithmetic, is is possible that the sequence $(\min\{a_k,a'_k\})_k$ is non-uniformly semi-arithmetic. Thus, \Cref{C:Env of semilinear} cannot be improved to uniform semi-arithmeticity.

\section{Formal Languages}

Let $X^*$ denote the free monoid generated by a set $X$, or equivalently, the set of all finite sequences of elements in $X$ with concatenation as monoidal structure.
Call the elements of $X^*$ {\em words} in the {\em alphabet} $X$, and denote the empty word by $\varepsilon$.
For $w \in X^*$, define \textit{length function} $|\cdot| \colon X^* \rightarrow \B N$ to be the unique homomorphism defined on generators by $|x| = 1$ for all $x \in X$. Observe that
\begin{equation} \label{lengths}
    |w_1^{k_1} \hdots w_n^{k_n}| = \sum_{i=1}^n k_i \cdot |w_i|. 
\end{equation}
A subset $\C L \subseteq X^*$ is called a {\em language} on the alphabet $X$.
Given languages $\C L, \C M \subseteq X^*$ let $\C L \cdot \C M$ denote the language
$$\C L \cdot \C M \col=\{ uv \, : \, u \in \C L, v \in \C M\}.$$
Write $\C L^n$ for the $n$-fold concatenation $\C L \cdot \hdots \cdot \C L$, where by convention $\C L^0\col=\{\varepsilon\}$. Further define the \textit{Kleene star} of $\C L$ to be $$\C L^* \col= \bigcup_{n \geq 0} \C L^n.$$

\begin{defn}\label{proj}
    Let $X,Y$ be disjoint finite alphabets.
Then the \textit{projection homomorphism} of free monoids is the homomorphism
\begin{equation*}
\pi_X \colon (X \sqcup Y)^* \to X^*
\end{equation*}
defined on generators by $\pi_X(x)=x$ for all $x \in X$ and $\pi_X(y)=\varepsilon$ for all $y \in Y$.
\end{defn}

\begin{defn}\label{D:P(w)}
Given a word $w \in X^*$, a {\em subword} is a subsequence of $w$.
Let $\mathcal{P}(w)$ denote the finite language consisting of the subwords of $w$.
Observe that, for words $w_1,\hdots,w_n \in X^*$,
\begin{equation}\label{E:subwords concatenation}
\mathcal{P}(w_1\hdots w_n) = \mathcal{P}(w_1) \cdot \hdots \cdot \mathcal{P}(w_n).
\end{equation}
\end{defn}

\begin{defn}\label{D:L pseudo-norm}
Let $\C L \subseteq X^*$ be a language. 
The {\em cancellation length relative to $\C L$} of $w \in X^*$ is defined as
$$\| w \|_{\C L} \col= \inf \{ |w|-|u| : u \in \mathcal{P}(w) \cap \C L\}.$$
Equivalently, $\|w\|_{\C L}$ is the infimum of all $\ell$ such that it is possible to delete $\ell$ characters from $w$ to yield an element of $\C L$.
\end{defn}

\begin{defn}[Compare {\cite[Section~3.1.1]{hopcroft}, \cite[Theorem~3.1]{shamir}}]\label{D:Regular language}
The collection of \textit{regular} languages over the alphabet $X$ is the smallest collection of languages satisfying the following:
\begin{enumerate}
\item If $\C L \subseteq X^*$ is finite, then $\C L$ is regular;
\item If $\C L$ and $\C M$ are regular, then $\C L^*, \C L \cup \C M $ and $\C L \cdot \C M$ are also regular.
\end{enumerate}
\end{defn}

\begin{defn}[Compare {\cite[Section~5.12]{hopcroft}, \cite[Section~19]{chomsky}}]\label{D:CFL}
A {\em context-free grammar} (CFG) is a quadruple $G=(V,X,R,v_0)$, where $X \subseteq V$ are finite sets of characters, $v_0 \in V \setminus X$ and $R \subseteq (V \setminus X) \times V^*$ is a finite set. Let $\C L \subseteq V^*$ be the smallest language containing the word $v_0$ such that, if $w_0 vw_1 \in \C L$ with $w_0,w_1 \in V^*$ and $v \in V \setminus X$, and if $(v,v') \in R$, then $w_0 v' w_1 \in \C L$. The language $\C L' = \C L \cap X^*$ is called the {\em context-free language} (CFL) generated by $G$.
\end{defn}

We will now state some closure properties of CFLs.

\begin{theorem}[{\cite[Theorem~7.27]{hopcroft}}]\label{T:preimage CFL}
Let $\C L \subseteq Y^*$ be a CFL, and let  $\phi \colon X^* \to Y^*$ be a monoid homomorphism. Then $\phi^{-1}(\C L)$ is also a CFL. \qed
\end{theorem}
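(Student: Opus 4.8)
The plan is to pass from grammars to the equivalent machine model. First I would invoke the standard equivalence (see \cite{hopcroft}) between context-free grammars and pushdown automata: a language $\C M \subseteq Y^*$ is a CFL if and only if it is accepted by a pushdown automaton (PDA) with $\varepsilon$-transitions. This reduces the statement to showing that the class of PDA-recognisable languages is closed under inverse homomorphism, after which the backward conversion from a PDA to a grammar (again \cite{hopcroft}) finishes the argument.

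So let $P$ be a PDA with input alphabet $Y$, finite state set $Q$ and stack alphabet $\Gamma$, with $L(P) = \C L$. I would build a PDA $P'$ with input alphabet $X$ whose states are the pairs $(q,u)$ with $q \in Q$ and $u$ a suffix of $\phi(x)$ for some $x \in X$; since $X$ is finite and each $\phi(x)$ is a fixed word, there are only finitely many such $u$, and the $u$-component serves as a bounded input buffer. When the buffer is empty, $P'$ may read an input letter $x \in X$ and move, without altering the stack, from $(q,\varepsilon)$ to $(q,\phi(x))$. When the head of the buffer is a letter $y \in Y$, $P'$ performs an $\varepsilon$-move that carries out a transition of $P$ on input $y$, updating the stack exactly as $P$ would, and advances the buffer, going from $(q,yu)$ to $(q',u)$. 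Finally $P'$ accepts precisely when the input is exhausted, the buffer is empty, and the simulated configuration of $P$ is accepting (by final state or by empty stack, according to the convention chosen for $P$).

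The verification is then routine: a computation of $P'$ on a word $w = x_1\cdots x_m \in X^*$ tracks, letter by letter, a computation of $P$ on $\phi(x_1)\cdots\phi(x_m) = \phi(w)$, so $P'$ accepts $w$ if and only if $P$ accepts $\phi(w)$, that is $L(P') = \phi^{-1}(\C L)$; converting $P'$ into a grammar then exhibits $\phi^{-1}(\C L)$ as a CFL. The only delicate point is the empty word: if $\phi(x) = \varepsilon$ for some $x$, then reading $x$ must be a step leaving both stack and simulated state untouched, and one has to check that such steps, interacting with $P$'s own $\varepsilon$-moves, produce neither spurious acceptance nor an inability to make progress; keeping the unprocessed suffix inside the state and draining it only through $\varepsilon$-transitions makes this bookkeeping mechanical, which is why the result is classical and we are content to cite \cite{hopcroft}.
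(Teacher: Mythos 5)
The paper offers no proof of this statement at all—it is quoted as Theorem~7.27 of Hopcroft et al.—and your argument is exactly the standard proof from that reference: accept $\C L$ by a PDA and simulate it on input $X$ by buffering the unread suffix of $\phi(x)$ in the finite-state component. It is correct, with the one small caveat that the construction should also include explicit transitions letting $P'$ simulate the $\varepsilon$-moves of $P$ with the buffer left unchanged (you allude to these only in passing when discussing $\phi(x)=\varepsilon$).
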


\begin{theorem}[{\cite[Theorem~7.30]{hopcroft}}]\label{T:regular cap CFL}
If $\C R \subseteq X^*$ is regular and $\C L \subseteq X^*$ is a CFL then $\C R \cap \C L$ is also a CFL. \qed
\end{theorem}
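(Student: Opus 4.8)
The plan is to prove this by the classical product (or \emph{triple}) construction, which interleaves a grammar derivation for $\C L$ with a finite-state reading of $\C R$. First I would pass from the closure-theoretic description of regularity in \Cref{D:Regular language} to an automaton model: every regular language over $X$ is recognised by a deterministic finite automaton $A=(Q,X,\delta,q_0,F)$ with $Q$ finite, where $\delta\colon Q\times X\to Q$ extends to $\hat\delta\colon Q\times X^*\to Q$ by $\hat\delta(q,\varepsilon)=q$ and $\hat\delta(q,wa)=\delta(\hat\delta(q,w),a)$, so that $w\in\C R$ if and only if $\hat\delta(q_0,w)\in F$. Simultaneously I fix a context-free grammar $G=(V,X,R,v_0)$ generating $\C L$ in the sense of \Cref{D:CFL}, writing $Y\Rightarrow^*_G w$ to mean that $w$ is obtainable from $Y$ by the rewriting rules of $R$.

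I would then build a grammar $G'$ whose nonterminals are triples. Set $V'=X\cup\{(p,Y,q):p,q\in Q,\ Y\in V\}\cup\{v_0'\}$ with $v_0'$ a fresh start symbol, the intended meaning of $(p,Y,q)$ being the set of terminal words $w$ with $Y\Rightarrow^*_G w$ and $\hat\delta(p,w)=q$. The production set $R'$ consists of: (i) $v_0'\to(q_0,v_0,q_f)$ for each $q_f\in F$; (ii) $(p,a,\delta(p,a))\to a$ for each $a\in X$ and $p\in Q$; and (iii) for each production $(A,Y_1\cdots Y_k)\in R$ and each sequence $p_0,\dots,p_k\in Q$, the production
$$(p_0,A,p_k)\to(p_0,Y_1,p_1)(p_1,Y_2,p_2)\cdots(p_{k-1},Y_k,p_k),$$
where the degenerate case $(A,\varepsilon)\in R$ contributes $(p,A,p)\to\varepsilon$. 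Because $Q$ and $R$ are finite and the right-hand sides of $R$ have bounded length, $V'$ and $R'$ are finite, the left-hand sides lie in $V'\setminus X$, and the right-hand sides lie in $(V')^*$, so $G'=(V',X,R',v_0')$ is a legitimate CFG.

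The heart of the argument is the invariant
$$(p,Y,q)\Rightarrow^*_{G'} w \iff \bigl(Y\Rightarrow^*_G w \text{ and } \hat\delta(p,w)=q\bigr),$$
which I would prove by induction on the length of the derivation (equivalently, on $|w|$). The base case is handled by the terminal rules (ii) together with $\hat\delta(p,a)=\delta(p,a)$, while the inductive step factors a derivation through a production of type (iii): a word $w$ derived from $(p_0,A,p_k)$ splits as $w=w_1\cdots w_k$ with each $w_i$ derived from $(p_{i-1},Y_i,p_i)$, and the chaining identity $\hat\delta(p,uv)=\hat\delta(\hat\delta(p,u),v)$ makes the conditions $\hat\delta(p_{i-1},w_i)=p_i$ telescope to $\hat\delta(p_0,w)=p_k$, matching the state bookkeeping in both directions. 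Granting the invariant, rule (i) yields $v_0'\Rightarrow^*_{G'} w$ for $w\in X^*$ exactly when $v_0\Rightarrow^*_G w$ and $\hat\delta(q_0,w)\in F$, so the CFL generated by $G'$ is precisely $\C L\cap\C R$.

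The step I expect to be the genuine obstacle is the first one: the paper defines regularity purely through the closure operations of \Cref{D:Regular language}, so to obtain the automaton $A$ I must either cite the Kleene equivalence of regular expressions and finite automata or, to remain self-contained, construct $A$ by induction over that definition, verifying that DFA-recognisability is preserved under finite languages, union, concatenation, and Kleene star. Once a finite-state recogniser for $\C R$ is in hand, the triple construction and the telescoping induction are entirely routine.
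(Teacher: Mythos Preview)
Your proof is correct and is exactly the classical product construction one finds in the cited reference. Note, however, that the paper does not supply its own proof of this statement: it is recorded with a citation to \cite[Theorem~7.30]{hopcroft} and a terminal \qedsymbol, so there is no in-paper argument to compare against. Your write-up is more than what the paper itself contains; the only point worth tightening is the induction for the invariant, which is cleanest when phrased as an induction on the length of the $G'$-derivation (for the forward direction) and on the length of the $G$-derivation (for the reverse), rather than on $|w|$, since $\varepsilon$-productions can make derivation length and word length diverge.
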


\begin{defn}\label{D:D(S)}
Let $\Phi \colon S^* \rightarrow G$ be the canonical projection with $\Phi(s) = s$ for all $s \in S$. The \textit{cancellation language}, $\C W(G,S)$, is defined to be
$$\C W(G,S) =\Phi^{-1}(1) \subseteq S^*.$$
\end{defn}

We now state the following classical theorem.

\begin{theorem}[{Muller-Schupp, \cite[Theorem~III]{mschupp}, \cite[Section~6.1]{trees}}]\label{T:mschupp} The language $\C W(G, S)$ is a CFL if and only if $G$ is a virtually free group. \qed
\end{theorem}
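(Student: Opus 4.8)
The plan is to prove the two implications separately, with the implication ``virtually free $\Rightarrow$ $\C W(G,S)$ is a CFL'' being comparatively elementary and the converse requiring geometric input. For the easy direction I would first treat the free case: when $G$ is free on $X$ and $S = X^{\pm 1}$, the cancellation structure of $\C W(G,S)$ is captured by a two-sided Dyck grammar. Explicitly, one takes a single nonterminal $v_0$ with productions corresponding to the pairs $(v_0,\varepsilon)$, $(v_0, v_0 v_0)$, and $(v_0,\, x\, v_0\, x^{-1})$ for each $x \in X$, and verifies that the generated language is exactly the set of words over $S$ collapsing to the identity. Next I would check that context-freeness of $\C W(G,S)$ is independent of the chosen generating set: a change of generators is induced by a monoid homomorphism between free monoids, and CFLs are closed under inverse homomorphism by \Cref{T:preimage CFL}, so the property is an invariant of $G$. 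Finally I would transfer the property from a finite-index free subgroup $F \leq G$ to $G$ itself: one encodes the finitely many cosets of $F$ in the finite-state control of a pushdown automaton while running the $F$-word problem on the stack, or equivalently uses Reidemeister--Schreier rewriting together with \Cref{T:preimage CFL,T:regular cap CFL}.

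The substance lies in the converse. Assuming $\C W(G,S)$ is a CFL, the first step is to pass from the language-theoretic hypothesis to the geometry of the Cayley graph $\Gamma = \Gamma(G,S)$. From the pushdown-automaton (or grammar) description of $\C W(G,S)$, together with the pumping/decomposition behaviour of context-free derivations, I would extract a uniform combinatorial control on $\Gamma$: there is a constant $k$ such that every loop in $\Gamma$ can be triangulated into cells of diameter at most $k$, which is Muller--Schupp's notion of a \emph{$k$-triangulable} graph. This uniform triangulability constrains the space of ends of $\Gamma$: an infinite group whose Cayley graph is $k$-triangulable cannot be one-ended, so $G$ has either two or infinitely many ends.

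The remaining step is to leverage the ends structure and iterate. By Stallings' theorem on ends, a finitely generated group with more than one end splits nontrivially as an amalgamated free product or HNN extension over a finite subgroup. One checks that context-freeness of the word problem is inherited by the vertex groups of such a splitting, again via the closure properties recorded in \Cref{T:preimage CFL,T:regular cap CFL}, so each factor is either finite or again splits over a finite subgroup. To guarantee that this process terminates I would invoke accessibility: a group with context-free word problem is finitely presented, so Dunwoody's accessibility theorem bounds the number of successive splittings. The upshot is a decomposition of $G$ as the fundamental group of a finite graph of groups all of whose vertex groups are finite, and by the Karrass--Pietrowski--Solitar theorem any such group is virtually free.

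I expect the main obstacle to be the first geometric step of the hard direction, namely converting the purely language-theoretic hypothesis into the uniform $k$-triangulability of $\Gamma$, since this is where the entire passage from formal languages to group geometry is concentrated. A secondary but essential difficulty is securing accessibility: without a termination guarantee the iterated Stallings splitting need not resolve $G$ into finite vertex groups, so the reduction to finitely presented groups and the appeal to Dunwoody's theorem must be handled with care.
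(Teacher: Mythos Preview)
The paper does not prove this theorem: it is quoted as a classical result, attributed to Muller--Schupp and to Serre's treatment of trees, and closed immediately with no argument given. There is therefore nothing in the paper to compare your proposal against.

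For what it is worth, your outline is a faithful summary of the proof as it appears in the cited sources, including the passage from a context-free word problem to uniform triangulability of the Cayley graph, the appeal to Stallings' ends theorem, and the use of Dunwoody's accessibility for finitely presented groups to terminate the iterated splitting. One minor slip in the easy direction: the productions $(v_0,\, x\, v_0\, x^{-1})$ must range over all $x \in S = X^{\pm 1}$ rather than only $x \in X$, since otherwise a word such as $x^{-1} x$ is not derivable.
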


\begin{defn}
Let $X$ be a finite set. The \textit{Parikh homomophism} is the abelianisation map, denoted
$$\psi_X \colon X^* \to \B N^X.$$
It follows directly from the definition that for any $w \in X^*$, $\psi(w)(x)$ is equal to the number of occurrences of the letter $x$ in $w$. Therefore, for any $w \in X^*$,
\begin{equation}\label{E:Psi=sum circ Psi}
|w| = \sum_{x \in X} \psi_X(w)(x).
\end{equation}
\end{defn}

\begin{remark}\label{usefulrem22}
Considering that $\B N^{X \sqcup Y} \cong \B N^X \times \B N^Y$, the Parikh homomorphism $\psi_{X \sqcup Y} \colon (X \sqcup Y)^* \to \B N^{X \sqcup Y}$ has the form
$$
\psi_{X \sqcup Y} = (\psi_X \circ \pi_X, \psi_Y \circ \pi_Y).
$$
\end{remark}

An important result in the theory of CFLs is Parikh's theorem.

\begin{theorem}[{Parikh, \cite{parikh}}]\label{T:Parikh}
Let $\C L \subseteq X^*$ be CFL.
Then $\psi_X(\C L)$ is semilinear. \qed
\end{theorem}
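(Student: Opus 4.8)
The plan is to run the classical derivation-tree proof of Parikh's theorem, organised so that the semilinearity bookkeeping is discharged by the closure properties already recorded. First I would fix a context-free grammar $G=(V,X,R,v_0)$ generating $\C L$, write $N=V\setminus X$ for the set of nonterminals, and delete every nonterminal deriving no word of $X^*$ together with every production mentioning it; this changes neither $\C L$ nor the statement, and reduces to the trivial case $\psi_X(\C L)=\emptyset$ if $v_0$ is deleted. So I may assume $G$ is reduced. Passing to derivation trees (internal nodes labelled by nonterminals according to $R$, leaves by terminals), for $A\in N$ let $L_A\subseteq X^*$ be the set of terminal yields of derivation trees rooted at $A$; then $\C L=L_{v_0}$, and it suffices to prove $\psi_X(L_{v_0})$ is semilinear.

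Next I would isolate two finite families. Call a derivation tree \emph{thin} if no nonterminal is repeated along any root-to-leaf path; since $N$ is finite and production right-hand sides have bounded length, thin trees have bounded height and degree, so only finitely many are rooted at $v_0$. A \emph{pump} is a derivation-tree fragment rooted at some $B\in N$ with one distinguished recursion leaf again labelled $B$, all other leaves terminal, and minimal in the sense that the nonterminals strictly between root and recursion leaf are pairwise distinct and the subtrees hanging off that path are thin; there are finitely many pumps, forming a finite set $\Pi$. Inserting a pump $p$ with root $B$ into a tree means choosing a node labelled $B$, grafting $p$ there, and reattaching the former subtree at the recursion leaf; this adds $\psi_X(\sigma_p)$ to the Parikh vector of the yield, where $\sigma_p\in X^*$ is the side-yield of $p$. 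The structural lemma — proved by repeatedly finding a repeated nonterminal on a root-to-leaf path and excising the fragment it bounds, recursing on both pieces — is that every derivation tree rooted at $v_0$ arises from some thin tree $T_0$ by a finite sequence of pump insertions carried out in an order in which the root of each inserted pump is already present (a pump introducing a new nonterminal makes it available for subsequent insertions). Hence
\[
\psi_X(\mathrm{yield}(T))=\psi_X(\mathrm{yield}(T_0))+\sum_{p\in\Pi}n_p\,\psi_X(\sigma_p),
\]
where $n_p$ is the number of copies of $p$ inserted.

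Now fix a thin tree $T_0$ rooted at $v_0$, let $U_0\subseteq N$ be the set of nonterminals occurring in it, and let $\phi\colon\B N^{\Pi}\to\B N^{X}$ be the monoid homomorphism sending the basis vector $e_p$ to $\psi_X(\sigma_p)$. Let $\mathrm{Valid}(T_0)\subseteq\B N^{\Pi}$ denote the set of pump-multisets realisable when building up from $T_0$ in a legal order. I claim $\mathrm{Valid}(T_0)$ is semilinear: partitioning $\B N^{\Pi}$ according to which subset $\Pi'\subseteq\Pi$ of pump types occurs with positive multiplicity, legality for a given $\Pi'$ reduces to the single condition that every root of a pump in $\Pi'$ lie in the closure of $U_0$ under the rule "$B$ present and $p\in\Pi'$ rooted at $B$ $\Rightarrow$ all nonterminals of $p$ present"; when the condition holds, the corresponding part of $\B N^{\Pi}$ is $\{\sum_{p\in\Pi'}(1+m_p)e_p:m_p\in\B N\}$, a translate of a finitely generated submonoid, and when it fails the part is empty. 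Thus $\mathrm{Valid}(T_0)$ is a finite union of linear sets, and by \Cref{L:homomorphic image semilinear} so is $\phi(\mathrm{Valid}(T_0))$. Combining with the previous paragraph,
\[
\psi_X(\C L)=\bigcup_{T_0}\bigl(\psi_X(\mathrm{yield}(T_0))+\phi(\mathrm{Valid}(T_0))\bigr),
\]
a finite union of translates of semilinear sets, hence semilinear by \Cref{D:semilinear}.

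I expect the main obstacle to be the structural lemma of the second paragraph together with the semilinearity of $\mathrm{Valid}(T_0)$: one must pin down the minimality conditions so that thin trees and pumps are both finite in number yet every derivation tree decomposes over them, check that excision and insertion are mutually inverse up to reordering and additive on Parikh vectors, and — for the nontrivial inclusion in the final display — verify that any pump-multiset whose roots lie in the stated closure of $U_0$ can indeed be grafted onto $T_0$ in a legal order, possibly routing through auxiliary pump insertions to make a deep nonterminal available first. As Parikh's theorem is classical, in practice I would follow a standard account rather than reconstruct every detail.
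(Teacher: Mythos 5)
The paper does not prove this statement at all: Parikh's theorem is invoked as a classical black box (cited to Parikh, with the proof deferred to the literature), so your proposal is necessarily a different route --- you reconstruct the classical derivation-tree proof rather than cite it. As a reconstruction it is architecturally sound and, in one respect, arranged more carefully than many textbook accounts: instead of fixing a subset $U$ of nonterminals and arguing about derivations using ``exactly'' the nonterminals in $U$ (the step where several published proofs of Parikh's theorem have historically slipped, since excising pumps can destroy occurrences of a nonterminal), you track realisability of a pump multiset directly via the set $\mathrm{Valid}(T_0)$ and a closure condition on pump roots. The two load-bearing claims you flag do go through: for the decomposition, repeated excision of a fragment between two occurrences of the same nonterminal (recursing on both the residual tree and the excised fragment) terminates and yields a thin base plus minimal pumps, and reversing the excisions gives a legal insertion order because insertion only adds labelled nodes, so label-presence is monotone; for the converse inclusion, a greedy argument works --- if the insertion process got stuck, the set of nonterminal labels present would contain $U_0$ and be closed under your rule, hence contain the closure, hence contain the root of every remaining pump copy, a contradiction --- so every multiset with support $\Pi'$ satisfying the closure condition is realisable, and each piece of $\mathrm{Valid}(T_0)$ is a translate of the submonoid generated by $\{e_p : p \in \Pi'\}$, whence semilinearity follows via \Cref{L:homomorphic image semilinear} and \Cref{D:semilinear}. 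What the paper's approach buys is brevity and reliability (the result is genuinely classical and only its statement is needed downstream, in the proof of \Cref{T:multivariate CFL relative length}); what yours buys is self-containedness, at the cost of the combinatorial bookkeeping above, which you have sketched but would still need to write out in full for the argument to stand on its own.
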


\begin{defn}\label{D:Henry's R}
Let $\C L_1, \hdots,\C L_n \subseteq X^*$ be languages, and further let $Y = \{y_1,\hdots,y_n\}$ be a set disjoint from $X$. 
Define the \textit{enumeration language} of $\C L_1, \hdots,\C L_n$ in $(X \sqcup Y)^*$ to be
$$\C E = (\C L_1 \cdot \{y_1\})^* \cdot \hdots \cdot (\C L_n \cdot \{y_n\})^*.$$
\end{defn}

For simplicity, let us identify $\mathbf{N}^Y$ with $\mathbf{N}^n$.
Observe that $\C E$ is the disjoint union
\begin{equation}\label{E:partition R}
\C E = \bigsqcup_{\mathbf{k} \in \mathbf{N}^Y} \C E_{\mathbf{k}},
\end{equation}
where for every $\mathbf{k} = (k_1, \hdots, k_n) \in \B N^Y$,
$$
\C E_{\mathbf{k}} = (\C L_1 \cdot \{y_1\})^{k_1} \cdot \hdots \cdot (\C L_n \cdot \{y_n\})^{k_n}
$$
is the preimage of $\mathbf{k}$ with respect to $\psi_Y \circ \pi_Y \colon \C E \to \B N^Y$.
Moreover,
\begin{equation}\label{E:phi_X surjective on R_k}
\pi_X(\C E_{\mathbf{k}}) = \C L_1^{k_1} \cdot \hdots \cdot \C L_n^{k_n}.
\end{equation}

\begin{remark}
Enumeration languages can be considered as graded unions of combinatorial cubes, in the sense of \cite[Section~3]{combi}.
\end{remark}

\section{Multivariate stability}\label{S:multivariate stability}

The purpose of this section is to prove the main result of the paper.

\begin{theorem}\label{T:multivariate CFL relative length}
Let $\C L \subseteq X^*$ be a CFL, and let $w_1,\hdots,w_n \in X^*$. Then the sequence 
$
(\|w_1^k \hdots w_n^k\|_{\C L})_k
$
is uniformly semi-arithmetic in the sense of \Cref{D:semi-arithmetic}. In particular, if $(\|w_1^k \hdots w_n^k\|_{\C L})_k$ is eventually finite, and has period $m > 0$ and difference $d \geq 0$, then
$$
\lim_{k \to \infty} \frac{\|w_1^k \hdots w_n^k\|_{\C L}}{k}
$$
exists and is equal to $\frac{d}{m} \in \B Q$.

\end{theorem}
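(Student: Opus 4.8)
The plan is to reduce the statement about cancellation lengths of the words $w_1^k\cdots w_n^k$ to a statement about the lower envelope of a semilinear subset of $\B N^2$, so that \Cref{C:Env of semilinear} and \Cref{L:usa criterion} apply. Concretely, I would first form the enumeration language $\C E = (\C L_1\cdot\{y_1\})^*\cdots(\C L_n\cdot\{y_n\})^*$ of \Cref{D:Henry's R}, taking each $\C L_i$ to be the singleton $\{w_i\}$ — or rather, since we need \emph{subwords} of $w_i^{k}$, I would instead take $\C L_i = \mathcal P(w_i)$ and intersect with $\C E$ later. The point of the extra letters $y_i$ is bookkeeping: $\psi_Y\circ\pi_Y$ records the multi-index $\mathbf k$, and by \eqref{E:phi_X surjective on R_k} the $X$-part $\pi_X(\C E_{\mathbf k})$ runs over $\mathcal P(w_1)^{k_1}\cdots\mathcal P(w_n)^{k_n} = \mathcal P(w_1^{k_1}\cdots w_n^{k_n})$ by \eqref{E:subwords concatenation}. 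So the subwords of $w_1^k\cdots w_n^k$ that lie in $\C L$ correspond, via $\C E$, to words on the diagonal $\mathbf k = (k,\dots,k)$ whose $X$-projection lands in $\C L$.

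Next I would build a single CFL capturing the relevant data. Using \Cref{T:preimage CFL}, $\pi_X^{-1}(\C L)\subseteq (X\sqcup Y)^*$ is a CFL; intersecting with the regular language $\C E$ (regular by \Cref{D:Regular language}, since each $\C L_i=\mathcal P(w_i)$ is finite) gives, by \Cref{T:regular cap CFL}, a CFL $\C E' \subseteq \C E$ consisting exactly of those words of $\C E$ whose $X$-projection is in $\C L$. Then apply the Parikh homomorphism: $\psi_{X\sqcup Y}(\C E')$ is semilinear by \Cref{T:Parikh}, and using \Cref{usefulrem22} I can push forward to track the pair (total length on $X$, value of $\mathbf k$). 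Actually what I want is a two-coordinate semilinear set, so I would compose with the monoid homomorphism $\B N^X\times\B N^Y\to\B N^2$ sending the $X$-part to its total $\sum_{x}(\cdot)(x) = |\pi_X(\cdot)|$ via \eqref{E:Psi=sum circ Psi} and the $Y$-part to, say, $\min_i k_i$ — but $\min$ is not a monoid homomorphism, so instead I intersect with the preimage of the diagonal $\Delta_Y$ (\Cref{E:diagonal}, semilinear; use \Cref{T:intersection semilinear}) to force $k_1=\dots=k_n=:k$, then project to $(|\pi_X(u)|,\,k)$, a genuine homomorphism, semilinear by \Cref{L:homomorphic image semilinear}. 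Call this set $\Omega\subseteq\B N^2$.

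With $\Omega$ in hand, I claim $\OP{Env}(\Omega)_k = |w_1^k\cdots w_n^k| - \|w_1^k\cdots w_n^k\|_{\C L}$ — that is, $\inf\Omega_{(k)}$ is the largest length of a subword of $w_1^k\cdots w_n^k$ lying in $\C L$... wait, I need the \emph{lower} envelope to match the cancellation length, so I should instead record the pair $(|w|-|u|,\,k)$, i.e. the number of deleted letters; since $|w_1^k\cdots w_n^k| = k\sum_i|w_i|$ is a fixed affine function of $k$ by \eqref{lengths}, the coordinate change $(|\pi_X(u)|,k)\mapsto(k\sum_i|w_i|-|\pi_X(u)|,\,k)$ is given by an affine map on $\B N^2$ which preserves semilinearity (it is a translation composed with a homomorphism on the relevant sub-semigroup), and then $\OP{Env}(\Omega)_k$ is exactly $\|w_1^k\cdots w_n^k\|_{\C L}$, with the convention $\inf\emptyset=\infty$ matching the case where no subword lies in $\C L$. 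By \Cref{C:Env of semilinear}, this sequence is semi-arithmetic. Finally, to upgrade to \emph{uniform} semi-arithmeticity I invoke \Cref{L:usa criterion}: I must exhibit $D\ge 0$ with $a_{k+1}\le a_k + D$. This is the one genuinely group-theoretic input: taking an optimal subword realizing $\|w_1^k\cdots w_n^k\|_{\C L}$ and appending to it an optimal subword of $w_1\cdots w_n$ lying in $\C L$ (whose cancellation cost is some fixed constant $D_0$, finite provided some power... ) — here I need $\C L$ to contain a subword of $w_1^{j}\cdots w_n^{j}$ for some fixed $j$, but in the intended application $\C L = \C W(G,S)$ contains $\varepsilon$, so $D = |w_1\cdots w_n|$ works: deleting all of the last block $w_1\cdots w_n$ from an optimal witness for $k+1$ after restricting to the first $k$ blocks. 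I expect the main obstacle to be precisely this last step — making the $a_{k+1}\le a_k+D$ bound airtight in the abstract CFL setting (rather than assuming $\varepsilon\in\C L$), and being careful that the various intersections with $\Delta_Y$ and projections really do produce the claimed $\Omega$ with the claimed envelope; the formal-language manipulations are routine but bookkeeping-heavy. The concluding limit statement is then immediate from \Cref{L:usa limit}.
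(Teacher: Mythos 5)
Your construction is essentially the paper's proof: the same enumeration language $\C R=(\mathcal{P}(w_1)\cdot\{y_1\})^*\cdots(\mathcal{P}(w_n)\cdot\{y_n\})^*$, the same CFL $\C R\cap\pi_X^{-1}(\C L)$ via \Cref{T:preimage CFL,T:regular cap CFL}, Parikh's theorem, intersection with $\B N^X\times\Delta_Y$, a projection down to a semilinear subset of $\B N^2$ whose lower envelope is the sequence in question, and then \Cref{C:Env of semilinear} followed by \Cref{L:usa criterion} and \Cref{L:usa limit}. (The paper folds your two-step ``project to $(|\pi_X(u)|,k)$, then change coordinates affinely'' into a single monoid homomorphism $\xi\colon\B N^{X\sqcup Y}\to\B Z^2$, $(f,\mathbf{k})\mapsto\bigl(k_1,\sum_i k_i|w_i|-\sum_{x\in X}f(x)\bigr)$, and only afterwards observes that the image lies in $\B N^2$; this avoids your ``affine map on $\B N^2$'', which is not a monoid homomorphism, but the underlying idea is the same.)

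The one genuine gap is exactly the step you flagged, and neither of your justifications of $a_{k+1}\le a_k+D$ works as stated: ``appending an optimal subword of $w_1\cdots w_n$ lying in $\C L$'' tacitly assumes $\C L$ is closed under concatenation, and ``deleting the last block from an optimal witness for $k+1$'' runs in the wrong direction (besides, removing letters from a word of $\C L$ need not keep you in $\C L$); no assumption such as $\varepsilon\in\C L$, and no group-theoretic input, is needed. The correct observation is simpler: $w_1^k\cdots w_n^k$ is itself a subword of $w_1^{k+1}\cdots w_n^{k+1}$, so $\mathcal{P}(w_1^k\cdots w_n^k)\subseteq\mathcal{P}(w_1^{k+1}\cdots w_n^{k+1})$, while $|w_1^{k+1}\cdots w_n^{k+1}|=|w_1^k\cdots w_n^k|+D$ with $D=\sum_i|w_i|$. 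Hence every $u\in\mathcal{P}(w_1^k\cdots w_n^k)\cap\C L$ satisfies $|w_1^{k+1}\cdots w_n^{k+1}|-|u|=\bigl(|w_1^k\cdots w_n^k|-|u|\bigr)+D$, giving $\|w_1^{k+1}\cdots w_n^{k+1}\|_{\C L}\le\|w_1^k\cdots w_n^k\|_{\C L}+D$ in $\widehat{\B N}$ (trivially when the right-hand side is $\infty$). With this substituted for your final paragraph, your argument coincides with the paper's; the emptiness worry is moot, since the definitions and \Cref{L:usa criterion} allow the all-$\infty$ case and the limit statement explicitly assumes eventual finiteness.
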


\begin{proof}
Let $U \subseteq \B N^2$ be defined by
$$
U \col= \{(k, |w_1^k \hdots w_n^k|-|u|): u \in \mathcal{P}(w_1^k \hdots w_n^k) \cap \mathcal{L}, \, k \in \mathbf{N}\}.
$$
Observe that by \Cref{D:L pseudo-norm},
$$U_{(k)} = \{|w_1^k \hdots w_n^k| - |u|: u \in \mathcal{P}(w_1^k \hdots w_n^k) \cap \mathcal{L}\},$$
and hence by \Cref{D:Env},
\begin{equation}\label{important}
    \OP{Env}(U) = (\|w_1^k \hdots w_n^k\|_{\C L})_k.
\end{equation}

Let $Y=\{y_1,\hdots,y_n\}$ be disjoint from $X$, and consider the enumeration language
\begin{equation} \label{henry R}
    \C R = (\mathcal{P}(w_1)\cdot \{y_1\})^* \cdot \hdots \cdot (\mathcal{P}(w_n)\cdot \{y_n\})^* \subseteq (X \sqcup Y)^* 
\end{equation}
in the sense of \Cref{D:Henry's R}. By \Cref{D:Regular language}, $\C R$ is a regular language.
With $\pi_X \colon (X \sqcup Y)^* \to X^*$ 
in the sense of \Cref{proj}, set
\begin{equation}\label{keyeq}
\C M \col= \C R \cap \pi_X^{-1}(\C L).
\end{equation}
\Cref{T:preimage CFL,T:regular cap CFL} imply that $\C M$ is a CFL in $(X \sqcup Y)^*$.
By \Cref{T:Parikh}, 
$$
M \col= \psi_{X \sqcup Y}(\C M) \subseteq \B N^{X \sqcup Y}
$$
is a semilinear set.

Consider the diagonal $\Delta_Y  \subseteq \B N^Y$ as defined in \Cref{E:diagonal}. Then by \Cref{E:diagonal} and \Cref{products}, $\B N^X \times \Delta_Y$ is a finitely-generated linear subset of $\B N^{X \sqcup Y}$.
By \Cref{T:intersection semilinear},
$$
M_\Delta \col= M \cap (\B N^X \times \Delta_Y)
$$
is semilinear in $\B N^{X \sqcup Y}$. Now define a monoid homomorphism 
\begin{align*}
    \xi\colon \B N^{X \sqcup Y} &\to \B Z^2\\
         (f, \mathbf{k}) &\mapsto \big(k_1, \sum_{i = 1}^n k_i \cdot |w_i| - \sum_{x \in X} f(x)\big)
\end{align*}
where we identify $\B N^{X\sqcup Y}$ with $\B N^X \times \B N^Y$. \newline

\begin{claim}
   $\xi(M_\Delta) = U$.
\end{claim}\newline

\begin{claimproof}
    Observe that $(k, \ell) \in \xi(M_\Delta)$ if and only if 
    $$
    \exists \, u' \in \C M : \psi_{X \sqcup Y}(u') \in \B N^X \times \Delta_Y \text{ and } \xi(\psi_{X \sqcup Y}(u'))=(k,\ell). 
    $$
    Resolving coordinates, and by both \Cref{usefulrem22} and the definition of $\xi$, this is equivalent to
    $$
        \exists \, u' \in \C M : \sum_{i =1}^n k \cdot |w_i| - \sum_{x \in X} \psi_X(\pi_X(u')) = \ell \text{ and } \psi_Y(\pi_Y(u')) =k \cdot \mathbf{1}_Y.
    $$
    By \eqref{lengths}, \eqref{E:Psi=sum circ Psi} and \eqref{E:partition R} this is equivalent to
    $$
    \exists \, u' \in \C M : |w_1^k \hdots w_n^k|-|\pi_X(u')|= \ell \text{ and } u' \in \C R_{k \cdot \mathbf{1}_Y}.
$$
Since $\C M=\C R \cap \pi_X^{-1}(\C L)$ and by \eqref{E:subwords concatenation} and \eqref{E:phi_X surjective on R_k}, this is equivalent to
$$
\exists \, u \in \mathcal{P}(w_1^k\hdots w_n^k) \cap \C L : |w_1^k\hdots w_n^k|-|u|=\ell
$$
which is equivalent to $(k,\ell) \in U$, completing the proof of the claim.
\end{claimproof}
\newline
    
Finally, since $M_\Delta$ is semilinear in $\B N^{X \sqcup Y}$ and $\xi$ is a homomorphism of monoids, it follows that $\xi(M_\Delta)$ is the union of finitely many linear subsets of $\B Z^2$.
Since $\xi(M_\Delta)=U$ is contained in $\B N^2$, this implies that $U \subseteq \B N^2$ is semilinear. Hence, by \ref{important} and \Cref{C:Env of semilinear}, $(\|w_1^k \hdots w_n^k\|_{\C L})_k$ is semi-arithmetic.

For any $k \geq 0$ the word $w_1^k \hdots w_n^k$ can be obtained from $w_1^{k+1} \hdots w_n^{k+1}$ by deleting $D\col=\sum_i|w_i|$ symbols.
By the definition of $\| \cdot \|_{\C L}$,
$$
\|w_1^{k+1} \hdots w_n^{k+1}\|_{\C L} \leq \|w_1^{k} \hdots w_n^{k}\|_{\C L} + D.
$$
Hence, by \Cref{L:usa criterion}, $(\|w_1^k \hdots w_n^k\|_{\C L})_k$ is a uniformly semi-arithmetic sequence.
The final statement of the theorem follows from \Cref{L:usa limit}.
\end{proof}

\begin{proof}[Proof of \Cref{T:main}]
Let $\Phi \colon S^* \to G$ be the canonical projection as defined in \Cref{D:D(S)}. Now consider some $g_1,\hdots, g_n \in G$ and choose  $w_1,\hdots,w_n \in S^*$ such that $\Phi(w_i)=g_i$. Since the conjugation-invariant length is equal to the cancellation length, 
$$
\|g_1^k \hdots g_n^k\|_{S} = \|w_1^k \hdots w_n^k\|_{\C W(G,S)},
$$
and in particular, by \Cref{T:multivariate CFL relative length}, $(\|g_1^k \hdots g_n^k\|_{S})_{k}$ is uniformly semi-arithmetic sequence $(a_k)_k$ with period $m > 0$ and difference $d \geq 0$.
Since $\|g\|_{S}<\infty$ for all $g \in G$, the sequence $(\|g_1^k \hdots g_n^k\|_{S})_k$ is eventually finite, and hence 
$$\lim_{k \to \infty} \frac{\|g_1^k \hdots g_n^k\|_{S}}{k}$$
exists and converges to $\frac{d}{m}$.
\end{proof}

\bibliography{ref}
\bibliographystyle{plain}

\end{document}